\providecommand{\keywords}[1]
{
  \small	
  \textbf{\textit{Keywords:}} #1
}
\theoremstyle{plain}% Theorem-like structures provided by amsthm.sty
\newtheorem{theorem}{Theorem}[section]
\theoremstyle{definition}
\theoremstyle{remark}
\title{A mathematical model with nonlinear relapse: conditions for a forward-backward bifurcation}
\date{October 2022}
\author[1,*,+]{Fabio Sanchez}
\author[2,+]{Jorge Arroyo-Esquivel}
\author[1,+]{Juan G. Calvo}
\affil[1]{Universidad de Costa Rica, Centro de Investigaci\'on en Matem\'atica Pura y Aplicada - Escuela de Matem\'atica, San Jos\'e, Costa Rica}
\affil[2]{Department of Mathematics, University of California Davis, CA, USA}
\affil[*]{fabio.sanchez@ucr.ac.cr}
\affil[+]{these authors contributed equally to this work}
\date{}
\begin{document}
\maketitle

\begin{abstract}
We constructed a Susceptible-Addicted-Reformed model and explored the dynamics of nonlinear relapse in the Reformed population. The transition from susceptible considered {\it at-risk} is modeled using a strictly decreasing general function, mimicking an influential factor that reduces the flow into the addicted class. The {\it basic reproductive number} is computed. Furthermore, $R_0$ determines the local asymptotically stability of the addicted-free equilibrium. Conditions for a forward-backward bifurcation were established using $R_0$ and other threshold quantities. A stochastic version of the model is presented, and some numerical examples are shown. Results showed that the influence of the temporarily reformed individuals is highly sensitive to the initial addicted population.
\end{abstract}

\keywords{
Nonlinear relapse; backward bifurcation; epidemic models; social determinants; addiction
}

\section{Introduction} \label{sec:intro}

Infectious diseases have been a burden to public health for some time. The transmission mechanisms of pathogens are mainly close contact with an infectious host, airborne, via a vector, and in some cases via contact with an infected area~\cite{brauer2001mathematical}. However, more recently, health authorities worldwide have been vigilant with the high number of mental health incidents, highlighted primarily by the Covid-19 pandemic~\cite{who}. Social factors provide a unique challenge to construct mathematical models that include social aspects not typically included in epidemic models. However, the incorporation of social determinants in these models is inherently difficult. In previous work, social determinants were introduced as ``epidemics" where transmission happened through social interactions, similar to infectious diseases. For example, a drinking dynamics model using a nonlinear system of differential equations~\cite{sanchez2007drinking}, where the ``infectious" class was the drinking population and the interaction between nondrinkers and drinkers simulated an epidemic process. We based our theoretical framework on the latter. Other models simulating social dynamics include: a bulimia model~\cite{gonzalez2003too}, drug models~\cite{song2006raves,behrens1999dynamic}, and a sex worker industry model~\cite{davidoff2006}, among others.

Mathematical models applied to infectious diseases have become common; more recently, an insurmountable number of models arose during the Covid-19 pandemic~\cite{chowell2003,sanchez2019zika,garcia2022covid} (and references therein). In general terms, when studying infectious diseases, mathematical models help understand disease transmission dynamics. Furthermore, mathematical models, in some cases, can provide insight to health authorities to construct and develop efficient public health policies~\cite{garcia2022covid}.

Modeling social interactions as epidemic processes can provide a helpful understanding of the phenomenon studied. Here, we model addiction as an infectious disease where the interactions between the non-addicted and addicted individuals can cause an ``epidemic" process and confer an ``infection". Drug addiction has been a problem worldwide for many decades~\cite{obrien2006,wise2014,cami2003}. In particular, when the crack ``epidemic" of the 1980s was in full force, the derivative of cocaine, a more pure and more expensive narcotic, led to a faster addiction and deterioration of individuals that consumed the drug~\cite{hass2009,falck2008}.

Furthermore, relapse rates of addicted individuals, especially those that used potent narcotics such as crack cocaine, methamphetamine, fentanyl, and heroin, among others, are very high~\cite{ronaldo2017,klein2021}. In the model constructed here, we looked at nonlinear relapse rates and the influence of those who recovered and want to provide support for non-addicted presumed susceptible individuals. This is done via a general function that depends on the temporarily recovered population and other parameters. 

Epidemic models have helped describe transmission dynamics of infectious pathogens and derive strategies for their control, prevention, and reduction of incidence, among others. Here, we provide a theoretical framework to study social phenomena studied via an epidemic model and highlight the sensitivity of initial conditions.

The article is organized as follows: in Section~\ref{sec:model}, we give details of the mathematical model. In Section~\ref{sec:math}, we present the mathematical analysis. In Section~\ref{sec:stochastic}, we provide a stochastic version of the model and provide some numerical examples. Finally, in Section~\ref{sec:disc}, we provide a discussion based on our results.

\section{Mathematical Model} \label{sec:model}
The model we consider is based on~\cite{sanchez2007drinking}, where authors explored the impact of nonlinear influence on drinking behavior dynamics. In our model, we consider three compartments: susceptible individuals ($S$), addicted individuals ($A$), and temporarily reformed individuals ($\Tilde{S}$). The model transitions follow the typical SIR model~\cite{brauer2001mathematical,hethcote2000mathematics}. 

The recruitment rate, $\beta$, represents the strength of social influence on susceptible ({\it at-risk}) individuals. In this context, {\it transmission} is a collective behavior rather than an individual consequence; i.e., recruitment is not typically the work of a single individual, but instead is a result of the collective influence of a group of individuals as a whole~\cite{decker1996life}. Moreover, $\kappa \in [0,1]$ denotes the cost of addiction, and $\nu \in [0,1]$ is the willingness of reformed individuals to deter {\it at-risk} individuals from addiction. We then consider a positive, strictly decreasing smooth function $g$ 
%, that depends on the fraction of reformed individuals $\Tilde{S}/N$, 
defined by:
\begin{equation} \label{eq:reducingFactor}
g_{\kappa,\nu}(\Tilde{S})=\frac{\kappa}{1+\nu \frac{\Tilde{S}}{N}},
\end{equation}
which is a reducing factor that impacts transitions from $S$ to $A$. The function $g$ represents the impact of reformed individuals in the {\it at-risk} population. Here, high values of $\nu$ imply that a large proportion of the reformed class is helping the susceptible population, considered {\it at-risk}. 

The relapse of the reformed population is possible through interactions with individuals in the addicted class considered {\it infectious}, which refers to conditions that possibly spread through a strong collective social component. In our model, individuals can temporarily recover at rate $\gamma$ and transition into the susceptible ({\it at-risk}) class ($\Tilde{S}$). Rehabilitation programs have the potential to use the social influence of reformed individuals to deter {\it at-risk} individuals from relapse. However, reformed individuals typically encounter environmental pressures that may lead to relapse. Reformed individuals can once again become addicted via interaction with individuals in the addicted class $A$, with relapse rate $\phi$, that denotes the ``social influence" of temporarily reformed individuals. Finally, individuals leave the system at rate $\mu$, typically considered the natural exit rate. 
 
The model we just described corresponds to the system of nonlinear differential equations given by:
\begin{eqnarray} \label{eqODEs}
\frac{dS}{dt}&=&\mu N-\beta g(\Tilde{S})S \frac{A}{N}-\mu S, \nonumber \\
\frac{dA}{dt}&=&\beta g(\Tilde{S})S \frac{A}{N}+\phi \Tilde{S} \frac{A}{N}-\left(\mu+\gamma \right)A, \\
\frac{d\Tilde{S}}{dt}&=&\gamma A-\phi \Tilde{S} \frac{A}{N}- \mu \Tilde{S}, \nonumber
\end{eqnarray}
where $N=S+A+\Tilde{S}$ is the total (presumed constant) population. We re-scale system \eqref{eqODEs} by substituting $s=\frac{S}{N}$, $a=\frac{A}{N}$, $\Tilde{s}=\frac{\Tilde{S}}{N}$, obtaining the equivalent system of equations:
\begin{subequations} \label{ode}
\begin{align} 
\frac{ds}{dt}&=\mu -\beta g(\Tilde{s})s a-\mu s, \\
\frac{da}{dt}&=\beta g(\Tilde{s})s a+\phi \Tilde{s}a-(\mu+\gamma)a, \label{eqn5} \\
\frac{d\Tilde{s}}{dt}&=\gamma a-\phi \Tilde{s}a-\mu \Tilde{s}.
\end{align}
\end{subequations}
It is clear that $s+a+\Tilde{s}=1$ and the reducing factor \eqref{eq:reducingFactor} is therefore given by
\begin{eqnarray} \label{equation_f}
g(\Tilde{s})=\frac{\kappa}{1+\nu \Tilde{s}} \in [0,1].
\end{eqnarray}

%We list parameters, its definitions and values in Table $1$.
%\begin{table}[htb!] %h=here, t=top, b=bottom !=force placement if possible
%\caption{Parameters}
%\begin{center}
%\begin{tabular}{|c|c|}
%\hline Parameter & Definition \\
%\hline $\beta$ & convincing rate \\%& 0.009\\
%$\mu$ & natural exit rate \\%& 0.00015 \\% \footnotemark[1]  \\
%$\kappa$ & cost of addiction \\%& [0,1]\\
%$\nu$ & willingness to mentor \\%& [0,1] \\
%$\phi$ & relapse rate \\% & 0.005\\
%$\gamma$ & recovery rate \\ \hline %& 0.0027  \footnotemark[2] \\ \hline
%\end{tabular}
%\end{center}
%\end{table}
%\footnotetext[1]{Chapman, 2010; Covey, Menard, \& Franzese, 1992; Decker, 1996}
%\footnotetext[2]{Decker \& Lauritsen, 1996; Gatti, Tremblay, Vitaro, \& McDuff, 2005}
%\footnote{See Section 4 for a description of the parameter values.}

\section{Mathematical analysis} \label{sec:math}
We first analyze the addiction-free equilibrium, $(s_0^*, a_0^*, \Tilde{s}_0^*)=(1,0,0)$, that can be used to determine the basic reproductive number, $R_0$. In epidemiology, the basic reproductive number represents the number of secondary infections produced by an average infected individual; when this number is less than one, the disease typically dies out, while when it is greater than one, there will be an epidemic~\cite{brauer2001mathematical}. In this context, we consider $R_0$ to be a measure of the strength of the social influence of addicted individuals to recruit individuals into a vice. As we will demonstrate, $R_0>1$ implies the establishment of an infectious agent and $R_0<1$ typically implies that the number of addicted individuals decreases and goes to zero. Albeit, our model can sustain an addiction when $R_0<1$ under particular initial conditions.

\subsection{Basic reproductive number and addiction-free equilibrium}
\noindent %Using Equation (5) 
We use the next generation operator method~\cite{hethcote2000mathematics} to compute $R_0$. Let
$$\mathcal{F}= \beta g(\Tilde{s}) s a +\phi \Tilde{s} a \qquad \text{and} \qquad \mathcal{V}= (\mu +\gamma) a,$$
where $\mathcal{F}$ and $\mathcal{V}$ 
contains all terms flowing into $a$ and flowing out of $a$, respectively. It holds that
$$F=\frac{\partial \mathcal{F}}{\partial a}\Bigg|_{(s_0^*, a_0^*, \Tilde{s}_0^*)} =  \beta g(0)\,\, \text{and}\,\, V=\frac{\partial \mathcal{V}}{\partial a}\Bigg|_{(s_0^*, a_0^*, \Tilde{s}_0^*)} =\mu + \gamma.$$

The basic reproductive number is then:
$$R_0=F V^{-1}=\frac{\beta g(0)}{\mu +\gamma}=\frac{\beta \kappa}{\mu+\gamma},$$
where $\frac{1}{\mu+\gamma}$ represents the average amount of time spent in the addicted class. %The basic reproductive number for our model when $g$ is given by~\eqref{equation_f} is then: $$R_0=\frac{\beta \kappa}{\mu+\gamma}.$$ 
We then have the following result:

\begin{theorem}
The addiction-free equilibrium is stable if and only if $R_0<1$.
\end{theorem}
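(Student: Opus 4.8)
The plan is to establish the result by linearizing system \eqref{ode} about the addiction-free equilibrium and reading off the spectrum of the Jacobian. First I would form the Jacobian matrix $J$ of the right-hand sides $(f_1,f_2,f_3)$ with respect to $(s,a,\tilde{s})$, keeping $g$ symbolic. The only entries involving $g'(\tilde{s})$ are those multiplied by $a$ (namely $\partial f_1/\partial\tilde{s}$ and $\partial f_2/\partial\tilde{s}$), so I expect the precise shape of $g$ to be irrelevant at the equilibrium, where $a_0^*=0$.

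Next I would evaluate $J$ at $(s_0^*,a_0^*,\tilde{s}_0^*)=(1,0,0)$. Since $a_0^*=0$ annihilates every term carrying a factor of $a$, and $g(0)=\kappa$, I anticipate the Jacobian collapses to the nearly triangular form
\begin{equation*}
J_0=\begin{pmatrix} -\mu & -\beta\kappa & 0 \\ 0 & \beta\kappa-(\mu+\gamma) & 0 \\ 0 & \gamma & -\mu \end{pmatrix},
\end{equation*}
whose characteristic polynomial factors at once. Expanding along the first column gives $(-\mu-\lambda)^2\bigl(\beta\kappa-(\mu+\gamma)-\lambda\bigr)$, so the eigenvalues are $\lambda_1=\lambda_2=-\mu$ together with $\lambda_3=\beta\kappa-(\mu+\gamma)$.

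With the spectrum in hand I would invoke the standard linearization (Lyapunov first-method) theorem: the equilibrium is locally asymptotically stable exactly when every eigenvalue has negative real part. Because $\mu>0$ forces $\lambda_1,\lambda_2<0$ unconditionally, stability is dictated entirely by $\lambda_3$. The condition $\lambda_3<0$ reads $\beta\kappa<\mu+\gamma$, i.e. $R_0=\tfrac{\beta\kappa}{\mu+\gamma}<1$; conversely $R_0>1$ yields $\lambda_3>0$ and hence an unstable saddle. This gives the asserted equivalence in both directions, consistent with the next-generation framework already used to define $R_0$.

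The computation is routine, so I do not expect a genuine obstacle; the one point deserving care is the threshold case $R_0=1$, where $\lambda_3=0$ and linearization is inconclusive. I would simply note that this degenerate value is the bifurcation point studied later in the paper and falls outside the strict inequality $R_0<1$ claimed here, so it does not disturb the stated ``if and only if.'' I would also emphasize that the conclusion is insensitive to the particular reducing factor $g$ beyond its value $g(0)=\kappa$, since the monotonicity of $g$ enters only through $g'$ terms that vanish at $a_0^*=0$.
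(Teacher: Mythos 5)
Your proposal is correct and follows essentially the same route as the paper: compute the Jacobian of system \eqref{ode}, evaluate it at $(1,0,0)$ to obtain the same nearly triangular matrix with eigenvalues $\lambda_1=\lambda_2=-\mu$ and $\lambda_3=\beta\kappa-(\mu+\gamma)$, and conclude that stability holds exactly when $R_0<1$. Your added remarks on the borderline case $R_0=1$ and on the insensitivity to the form of $g$ beyond $g(0)=\kappa$ are sensible refinements but do not change the argument.
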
 

\begin{proof}
%The gang-free equilibrium occurs when $(s_0,i_0,\Tilde{s}_0)=(1,0,0)$. Using the technique of linearization, we can find 
The Jacobian of system \eqref{ode} is given by:
$$J(s,a,\Tilde{s}) =
\left[ {\begin{array}{ccc}
 \frac{-\beta \kappa}{1+\nu \Tilde{s}} a -\mu & \frac{-\beta \kappa}{1+\nu \Tilde{s}} s & 
\frac{\beta \kappa}{(1+\nu \Tilde{s})^2} s a \nu  \\
  \frac{\beta \kappa}{1+\nu \Tilde{s}} a & \frac{\beta \kappa}{1+\nu \Tilde{s}} s +\phi \Tilde{s} - 
(\mu+\gamma) & \frac{-\beta \kappa}{(1+\nu \Tilde{s})^2} s a \nu+\phi a \\
  0 & \gamma - \phi \Tilde{s}  & -\phi a-\mu
 \end{array} } \right],$$

and evaluating the addicted-free equilibrium yields to:
$$J(1,0,0) =
\left[ {\begin{array}{ccc}
 -\mu & -\beta \kappa & 0  \\
0 & \beta \kappa - (\mu+\gamma) &  0   \\
  0 & \gamma & -\mu
 \end{array}} \right].$$

The eigenvalues of this matrix are:
\begin{align*}
    \lambda_1, \: \lambda_2 &= -\mu<0,\\
    \lambda_3 &= \beta \kappa-(\mu+\gamma).
\end{align*}
%$$\lambda_1, \: \lambda_2 = -\mu<0,$$
%$$\lambda_3 = \beta \kappa - (\mu+\gamma) <0 \Longleftrightarrow R_0<1.$$

Note that $\lambda_3<0  \Longleftrightarrow R_0<1$, and since the equilibrium is stable if all of the eigenvalues are negative, the result holds.% First, note that $\mu>0$ and then we have that $\lambda_1=\lambda_2<0$. Now, in order to ensure that $\lambda_3<0$, we must have that:
%$$\frac{\beta \kappa}{\mu+\gamma}<1$$
%Therefore, if the gang-free equilibrium is stable, it must be that 
%$R_0<1$. 
\end{proof}

\subsection{Endemic equilibria}
%\textbf{HACER TRANSICION!}\\
In order to study the prevalence of addiction and endemic equilibria in our model, we need to define an analogous basic reproductive number for the reformed class, denoted by $R_\phi$, that represents the strength of social influence of addicted individuals to recruit reformed individuals back into addiction. It is defined by
$$R_\phi=\frac{\phi}{\mu+\gamma}.$$

%Gangs offer structure, support, and opportunities that mainstream society is sometimes unable to grant. Therefore, those who leave a gang might rejoin if they are unable to thrive outside of it~\cite{conly1993street}. Furthermore, threats of violence spurring from past gang rivalries can encourage a reformed gang member to rejoin his or her own gang for protection~\cite{decker1996life}. 

Since reformed individuals have already been exposed to a previous addiction, we assume that the rate at which a reformed individual relapses is higher than the rate in which an {\it at-risk} individual becomes addicted; i.e., $R_\phi>R_0$. As we will show, endemic equilibria exist whenever $R_0>1$,
%and $R_\phi>1$
and under certain initial conditions when $R_0<1$ and $R_\phi>1$. 

We study two cases: $\nu=0$ (absence of willingness factor) and $0< \nu \leq 1$. In the first case, the social influence of reformed individuals on the {\it at-risk} population is absent; this is, $g(\Tilde{s})=\kappa$. These two cases allow us to explore the impact that reformed individuals have on the population dynamics. 

\subsubsection{Case \texorpdfstring{$\nu=0$:\:}: absence of willingness factor}
Solving for the endemic equilibria of system \eqref{ode} when $\nu=0$ yields to:
$$s_1^* = \frac{\mu}{\beta \kappa a+\mu}, \qquad \Tilde{s}_1^*=\frac{\gamma a}{\phi a+\mu}.$$
Substituting these values into \eqref{eqn5} yields 
%$$\frac{\beta \kappa \mu (\phi a+\mu)+\phi \gamma a (\beta \kappa a+\mu)}{(\beta \kappa a+\mu)(\phi a+\mu)}=(\mu+\gamma).$$
%This leads to 
%the equation $xa^2+ya+z=0$ where
%\begin{eqnarray*}
%x &=& \beta \kappa \phi,\\
%y &=& \beta \kappa \mu+\mu \phi+\beta \kappa \gamma-\beta \kappa \phi,\\
%z &=& \mu^2 + \mu \gamma-\beta \kappa \mu.
%\end{eqnarray*}
%\noindent Substituting $R_0= \frac{\beta \kappa}{\mu +\gamma}$ and $R_\phi= \frac{\phi}{\mu+\gamma}$ yields 
the quadratic equation $x_2 a^2+x_1 a+x_0=0$, where 
the coefficients are given by:
\begin{eqnarray*}
x_2&=&R_\phi R_0, \\
x_1&=&R_0 (1-R_\phi) + R_\mu R_\phi,\\
x_0&=&R_\mu(1-R_0),
\end{eqnarray*}
where $R_\mu = \mu/(\mu+\gamma) \in (0,1)$. In this case, our system is very similar to the drinking model studied in~\cite{sanchez2007drinking}. We can construct a bifurcation diagram to analyze the stability of the endemic equilibria as a function of $R_0$ (as $\kappa$ varies). Stability depends on both the value of $R_0$ and the initial addicted population size. In Figure~\ref{alpha0}, we show a typical backward bifurcation curve, which typically occurs in systems with nonlinear relapse rates~\cite{sanchez2007drinking,hadeler1995core,xiao2010dynamics}. Furthermore, the system exhibits hysteresis; i.e., it is highly sensitive to initial conditions~\cite{hethcote2000mathematics}. After straightforward computations, the quadratic equation has a double root when $R_0$ is equal to 
\begin{equation}\label{eq:Rc}
    R_c =  R_\mu R_\phi \frac{1+R_\phi+2\sqrt{R_\phi(1-R_\mu)}}{(R_\phi-1)^2+4R_\mu R_\phi}.
\end{equation}
%which is a critical value that we denote by $R_c$. 
It is clear that there is no positive endemic equilibria if $R_0<R_c$, and it can be shown that there are two positive endemic equilibria when $R_c<R_0<1$; see~\cite{sanchez2007drinking}.

\begin{figure}[htb!]
\begin{center}
\includegraphics[width=.6\linewidth]{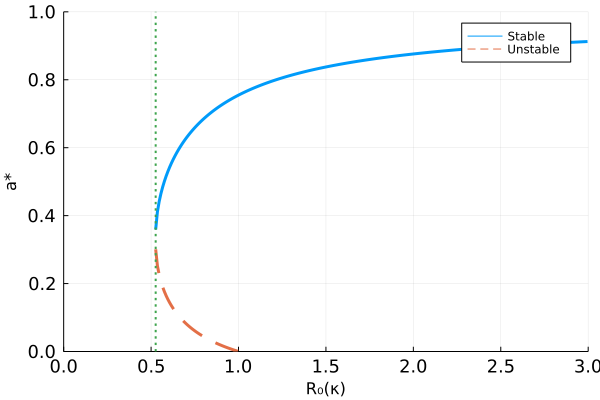}
\end{center}
\caption{Backward bifurcation with parameters $\mu=0.00015$, $\beta=0.009$, $\gamma=0.0027$, $\nu=0$, $\phi=0.005$, 
and $\kappa$ varies. The dotted vertical line represents the critical value $R_c\approx 0.52$ for which there is no positive endemic equilibria if $R_0<R_c$. There are two positive endemic equilibria when $R_c<R_0<1$.}
\label{alpha0}
\end{figure}

\subsubsection{Case \texorpdfstring{$0<\nu \leq 1$:\:} : presence of willingness factor}
Analyzing the system when $0<\nu \leq 1$ allows us to explore the impact of social influence of reformed individuals on the {\it at-risk} population. Solving for the endemic equilibria of system \eqref{ode} when $0<\nu \leq 1$ leads to 
the following:
$$s_2^* = \frac{\mu D}{\beta \kappa a+\mu D}, \qquad \Tilde{s}_2^*=\frac{\gamma a}{\phi a+\mu},$$
where $D=1 + \frac{\nu \gamma a}{\phi a+\mu}$. Substituting these values 
into Equation \ref{eqn5} yields 
%$$\frac{\beta \kappa \mu (\phi a+\mu)+\phi \gamma a (\beta \kappa a+\mu D) }{(\beta \kappa a+\mu D)(\phi a+\mu)}=(\mu+\gamma).$$
%This leads to the cubic expression of $a$, $ad^3+b a^2+cd +a = 0$ where
%\begin{eqnarray*}
%a&=&\beta \kappa \phi^2,\\
%b&=&2 \beta \kappa \phi \mu +\mu \phi^2 + \mu \phi \nu \gamma + \beta \kappa \phi 
%\gamma - \beta \kappa \phi^2,\\
%c&=&\beta \kappa \mu^2 + 2 \mu^2 \phi + \mu^2 \nu \gamma + \beta \kappa \mu 
%\gamma + \mu \phi \gamma + \mu \nu \gamma^2 - 2\beta \kappa \mu \phi,\\
%a&=&\mu^3 + \mu^2 \gamma - \beta \kappa \mu^2.
%\end{eqnarray*}
%Similarly as before, substituting  the values for $R_0$ and $R_\phi$ yieds 
the cubic equation $f(x)=0$, where 
\begin{equation} \label{eq:polynomial}
f(x) = x_3 a^3+x_2 a^2+x_1 a+x_0,    
\end{equation}
and the 
coefficients are given by:
%\begin{eqnarray*}
%x_3&=&\phi^2 R_0, \\
%x_2&=&\phi \left[ (\mu +\gamma)R_0 \left(1-R_\phi \right) +\mu \left(R_0 
%+R_\phi +\frac{\nu \gamma}{\mu +\gamma} \right) \right], \\
%x_1&=& \mu \left[\nu \gamma +(\mu +\gamma) \left(R_0 +R_\phi \right) +\mu
%R_\phi-2\phi R_0 \right], \\
%x_0&=& \mu^2 \left(1-R_0 \right).
%\end{eqnarray*}
\begin{eqnarray*}
x_3&=&R_\phi^2 R_0, \\
x_2&=&R_\phi \left[R_0 \left(1-R_\phi\right) +R_\mu( R_0+ R_\phi) +\nu R_\mu (1-R_\mu)  \right], \\
x_1&=& R_\mu \left[\nu (1-R_\mu) +R_0(1-R_\phi)+R_\phi(1- R_0) +R_\mu 
R_\phi \right], \\
x_0&=& R_\mu^2 \left(1-R_0 \right).
\end{eqnarray*}

%Figure~\ref{genbif} shows the bifurcation of our model, divided into four regions. The bifurcation exhibits both forward and backward behavior. $R_c$ and $R_0^*$ are double roots of the cubic, and are thresholds that help to determine the number of end-states (endemic equilibria) for a given value of $R_0$. In our diagram, $R_0$ is a function of $\kappa$. 
%We remark that $R_\phi$ is greater than 1.
%We now demonstrate conditions for the number of endemic equilibria for any $R_0$, according to each region of interest. %(Theorems~\ref{prop2} to \ref{prop5}). 

Figure~\ref{genbif} shows a typical bifurcation exhibiting both a forward and a backward behavior for model \eqref{ode} as a function of $R_0=R_0(\kappa)$. Let $R_c$ and $R_0^*$ (with $R_c<R_0^*$) be the values of $R_0$ for which \eqref{eq:polynomial} has two double roots, similarly as \eqref{eq:Rc}. 
%double roots of the cubic, 
These two constants are thresholds that determine the number of endemic equilibria for a given value of $R_0$. We remark that: 
\begin{enumerate}
    \item If $R_0>1$, there exists at least one positive equilibrium state, since $f(0) = x_0<0$ and $f(1)= R_0 (1-R_\mu)(R_\mu+R_\phi)+R_\mu(1+R_\phi)(R_\mu+R_\phi+\nu(1-R_\mu))>0$.
    \item If $R_0<1$ and $R_\phi<1$, the coefficients of $f$ are all positive, and therefore there is no positive equilibrium state. %Therefore, we consider from now on that $R_\phi>1$
\end{enumerate}

\begin{figure}[htb!]
\begin{center}
\includegraphics[width=.6\linewidth]{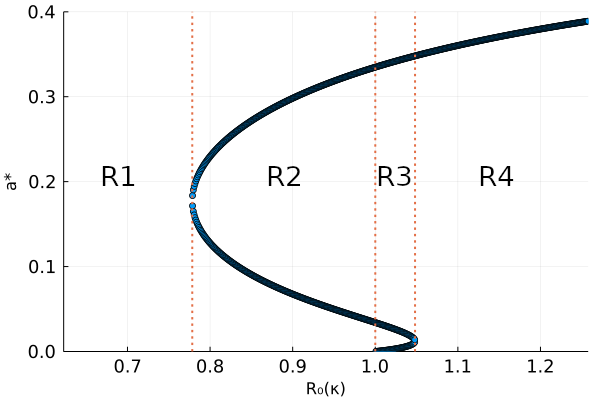}
\end{center}
\caption{Forward-backward bifurcation with parameters $\mu=0.00015$, $\gamma=0.0027$, $\beta=0.009$, $\nu=0.8$, $\phi=0.0044$ and $\kappa$ varies. The dotted lines ($R_0=R_c$, $R_0=1$ and $R_0=R_0^*$ from left to right) separate the domain into four regions: Region 1 ($R_0<R_c$) with no positive equilibria, Region 2 ($R_c<R_0<1$) with two positive equilibria, Region 3 ($1<R_0<R^*_0$) with three positive equilibria, and Region 4 ($R^*_0<R_0$) with one positive equilibria.}
\label{genbif}
\end{figure}
%First, in Region 1, 
%when $R_0$ is less than the double root, $R_c$, and $R_\phi>1$, we have a stable addiction-free equilibrium; this implies that the addicted population will decrease to zero, regardless of the initial addicted population size:

%\begin{theorem} \label{prop2}
%When $0<\nu \leq 1$, a necessary condition for stability of the addicted-free equilibrium is $0<R_0<R_c<1$.
%\end{theorem}

%\begin{proof}
%If the system converges to the addicted-free equilibrium, then the cubic polynomial \eqref{eq:polynomial}
%\begin{equation} \label{eq:polynomial}
%    f(a)=x_3 a^3+ x_2 a^2 + x_1 a+ x_0
%\end{equation}
%has a single real root $a_-<0$. In this case, $x_3>0$ implies that $f(a)>0$ for $a>a_-$. In particular $f(0)=\mu^2(1-R_0)>0$, which implies $R_0<1$. The condition for $f$ to have a single root implies that $R_0<R_c$, which completes the proof.
%\end{proof}

We now demonstrate conditions for the number of endemic equilibria as a function of $R_0$ and $R_\phi$, according to each region of interest as depicted in Figure~\ref{genbif}. In Region 2, 
%when $R_0<1$ is greater than the double root, $R_c$, and $R_\phi>1$, 
two endemic equilibria exist and the system exhibits backward behavior. In this case, the initial addicted population size determines if the addicted population can establish itself or decrease to zero:

\begin{theorem} \label{prop3} If $\nu>0$, a necessary condition for two positive equilibria is $0<R_c<R_0<1$ and $R_\phi>1$.
%When $0<\nu \leq 1$, a necessary condition for two positive equilibria is $0<R_c<R_0<1$ and $R_\phi>1$.
\end{theorem}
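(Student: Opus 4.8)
The plan is to treat $f$ in \eqref{eq:polynomial} as a cubic in $a$ with \emph{positive} leading coefficient $x_3 = R_\phi^2 R_0 > 0$ (all rates being positive) and $R_\mu \in (0,1)$, and to extract each of the three asserted conditions — $R_0 < 1$, $R_\phi > 1$, and $R_0 > R_c$ — from the signs of the coefficients $x_0, x_1, x_2$ together with Vieta's relations. Throughout I would read ``two positive equilibria'' as exactly two distinct positive roots of $f$, so that all three roots are real.

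First I would establish $R_0 < 1$. By Vieta, the product of the three roots equals $-x_0/x_3$. If exactly two roots are positive, the third must be nonpositive, forcing $-x_0/x_3 \le 0$ and hence $x_0 \ge 0$; since $x_3 > 0$ this gives $R_\mu^2(1-R_0) \ge 0$, i.e. $R_0 \le 1$, with strict inequality on the open region. Equivalently, if $R_0 > 1$ then $f(0) = x_0 < 0$ while $f(a) \to +\infty$ as $a \to +\infty$, so the number of positive roots is odd and never equal to two (this is the three-equilibria Region~3).

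Next I would establish $R_\phi > 1$, assuming $R_0 < 1$. Here I invoke the second of the two itemized observations preceding this theorem: when $R_0 < 1$ and $R_\phi < 1$ every coefficient of $f$ is strictly positive, so $f(a) > 0$ for all $a > 0$ and no positive root exists. A direct check shows the boundary $R_\phi = 1$ also yields $x_0, x_1, x_2, x_3 > 0$ (the terms carrying the factor $(1-R_\phi)$ simply vanish). Hence positive roots can occur only when $R_\phi > 1$, which is moreover consistent with the standing assumption $R_\phi > R_0$ since then $R_0 < 1 < R_\phi$.

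Finally, for $R_0 > R_c$: once $R_0 < 1$ and $R_\phi > 1$ are fixed, $f(0) = x_0 > 0$ and $f(+\infty) = +\infty$, so positive roots are created in pairs through a saddle--node (fold) as $R_0$ increases. The threshold $R_c$ is precisely the value at which $f$ acquires a positive double root, characterized by the simultaneous system $f(a) = f'(a) = 0$ with $a > 0$ (equivalently, vanishing of the cubic discriminant). For $R_0 < R_c$ the local minimum of $f$ on $(0,\infty)$ is positive and there are no positive roots (Region~1), whereas crossing $R_c$ produces the two equilibria of Region~2. The main obstacle is this last step: solving the resultant/discriminant condition for the specific coefficients $x_0, x_1, x_2, x_3$ to confirm that $R_c$ is a well-defined positive threshold playing the role of the lower fold (the analogue of \eqref{eq:Rc} for the cubic), and verifying that the newly born roots are genuinely positive rather than negative. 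This is a lengthy but routine elimination, and it is where the bulk of the computation lies.
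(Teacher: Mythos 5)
Your proposal is correct and follows essentially the same route as the paper's own proof: deduce $R_0<1$ from the sign of $f(0)=x_0$ given that the third real root must be nonpositive, obtain $R_\phi>1$ from the all-coefficients-positive observation, and get $R_0>R_c$ from the definition of $R_c$ as the double-root (discriminant) threshold. In fact you are somewhat more complete than the paper, whose proof omits the $R_\phi>1$ part entirely and simply asserts the $R_c$ inequality that you flag as the remaining computation.
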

\begin{proof}
If the system has two positive equilibria, the polynomial $f$ in Equation~\ref{eq:polynomial} has three real roots, which implies $R_0>R_c$. In addition, since $x_3>0$, only one of these equilibria has to be negative, which implies that $f(0)=\mu^2(1-R_0)>0$, which implies $R_0<1$.
\end{proof}

In Region 3, 
%when $R_0>1$ is less than the double root, $R_0 ^*$ and $R_\phi>1$, 
three endemic equilibria exist (two stable and one unstable). This implies that two end-states can occur and that the long-term population can establish itself at either a large or small size, depending on the initial addicted population size:

\begin{theorem} \label{prop4}
If $\nu>0$, a necessary condition for three positive equilibria is $1<R_0<R_0 ^*$ and $R_\phi>1$.
\end{theorem}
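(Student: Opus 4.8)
The plan is to translate ``three positive equilibria'' into the statement that the cubic $f$ in~\eqref{eq:polynomial} has three positive real roots, and then read off the three asserted inequalities from Vieta's formulas together with the double-root structure that defines $R_0^*$, paralleling the proof of Theorem~\ref{prop3}. First I would record that the leading coefficient $x_3 = R_\phi^2 R_0 > 0$, so Vieta's relations apply with $x_3$ in the denominator. Since all three roots are positive, their product $-x_0/x_3$ is positive, which forces $x_0 = R_\mu^2(1-R_0) < 0$ and hence $R_0 > 1$; this delivers the lower bound.

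For the constraint on $R_\phi$, I would use that the sum of the three positive roots, $-x_2/x_3$, must be positive, so $x_2 < 0$. Writing $x_2 = R_\phi[R_0(1-R_\phi) + R_\mu(R_0+R_\phi) + \nu R_\mu(1-R_\mu)]$ and noting that $R_\mu \in (0,1)$ and $\nu>0$ make the last two bracketed terms strictly positive, the bracket can be negative only if $R_0(1-R_\phi) < 0$; since $R_0 > 0$ this yields $R_\phi > 1$. (As a consistency check one also gets $x_1>0$ from the pairwise-product relation, though this is not needed for the stated conclusion.)

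For the upper bound $R_0 < R_0^*$, I would invoke the definition of $R_0^*$ as the larger of the two values of $R_0$ at which $f$ acquires a double root, i.e.\ at which the discriminant of $f$ vanishes. Having three distinct real roots requires the discriminant to be strictly positive, which holds only on the interval $(R_c, R_0^*)$ between the two double-root thresholds; intersecting this with $R_0 > 1 > R_c$ gives $1 < R_0 < R_0^*$. Combined with the previous paragraph, this establishes all three necessary conditions.

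I expect the main obstacle to be this last step. The Vieta arguments are purely algebraic sign checks, but pinning down that the discriminant is positive precisely on $(R_c, R_0^*)$ and negative for $R_0 > R_0^*$ requires analyzing the discriminant of the cubic as a function of $R_0$. In practice I would lean on the structural fact---already used implicitly in Theorem~\ref{prop3} and in~\cite{sanchez2007drinking}---that crossing each double-root value annihilates a conjugate pair of real roots, so that in Region~4 ($R_0 > R_0^*$) only the single real root guaranteed by the coefficient sign change survives, while the crossing at $R_0=1$ (where $x_0=0$) merely transports a simple root through the origin, converting the two positive roots of Region~2 into the three positive roots of Region~3 without touching the discriminant's sign.
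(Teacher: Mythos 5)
Your proposal is correct and follows the same skeleton as the paper's proof for the two $R_0$ bounds: the product-of-roots sign condition (equivalently, $f(0)<0$ for a positive leading coefficient) gives $R_0>1$, and the double-root thresholds $R_c$ and $R_0^*$ give $R_0<R_0^*$ --- both you and the paper take on faith the structural claim that the discriminant of $f$ is positive exactly on $(R_c,R_0^*)$, though you are more explicit that this is the step requiring further analysis. Where you genuinely go beyond the paper is the condition $R_\phi>1$: the paper's proof never addresses it, even though it appears in the statement of Theorem~\ref{prop4}. Your Vieta argument on the sum of roots --- three positive roots force $-x_2/x_3>0$, hence $x_2<0$, and since $R_\mu(R_0+R_\phi)$ and $\nu R_\mu(1-R_\mu)$ are strictly positive the bracket can only be negative if $R_0(1-R_\phi)<0$, i.e.\ $R_\phi>1$ --- is sound and fills this gap cleanly. (You also silently correct the paper's typo $f(0)=\mu^2(1-R_0)$, which should read $f(0)=x_0=R_\mu^2(1-R_0)$.) In short, your proof is a strict improvement: same route for the $R_0$ inequalities, plus an actual derivation of the $R_\phi$ inequality that the published argument omits.
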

\begin{proof}
If the system has three positive equilibria, then the polynomial $f$ in Equation~\ref{eq:polynomial} has three real equilibria, which implies $R_c<R_0<R_0^*$. If these equilibria are all positive, in particular this implies $f(0)=\mu^2(1-R_0)<0$, which implies $R_0>1$.
\end{proof}

In Region 4, 
%when $\mathcal{R}_0$ is greater than the double root $\mathcal{R}_0^*$, and $\mathcal{R}_\phi>1$, 
a unique endemic equilibrium exists and the addicted population will establish itself, regardless of the initial addicted population size:

\begin{theorem} \label{prop5}
If $\nu>0$, a sufficient condition for a unique positive equilibrium is $1<\mathcal{R}_0 ^*<\mathcal{R}_0$ and $\mathcal{R}_\phi>1$.
\end{theorem}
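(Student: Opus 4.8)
The plan is to count the positive roots of the cubic $f$ in \eqref{eq:polynomial} directly, as in Theorems~\ref{prop3} and~\ref{prop4}, but now arguing forward from the hypotheses to the conclusion rather than extracting necessary conditions. Throughout I use the roman quantities $R_0,R_\phi,R_\mu$ that appear in the coefficients $x_0,\dots,x_3$ (the statement's $\mathcal{R}$'s). Two signs are immediate: the leading coefficient $x_3=R_\phi^2 R_0$ is positive, so $f(a)\to+\infty$ as $a\to+\infty$, and the hypothesis $R_0>1$ makes the constant term $x_0=R_\mu^2(1-R_0)$ negative, so $f(0)<0$. By the intermediate value theorem these already force at least one positive root, settling existence; the whole difficulty is to show this root is the only positive one.

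For uniqueness I would exploit the defining property of $R_0^*$: together with $R_c$ it is one of the two values of $R_0$ at which $f$ has a double root, i.e. a zero of the cubic's discriminant $\Delta(R_0)$. On $R_c<R_0<R_0^*$ the cubic has three distinct real roots (the interior of Regions~2 and~3), and as $R_0$ increases through the upper fold $R_0^*$ two of them coalesce and leave the real axis, so just above $R_0^*$ one has $\Delta(R_0)<0$ and $f$ has a single real root, which by the first paragraph is positive. I would then promote this local statement to the whole ray $(R_0^*,\infty)$ by tracking how the positive-root count can change as $R_0$ grows: because $x_3>0$ stays bounded away from zero the degree never drops, so the count can only jump when a root crosses the origin, which by $f(0)=R_\mu^2(1-R_0)$ occurs solely at $R_0=1$, or when two positive roots collide in a fold, which occurs solely at the discriminant zeros $R_c$ and $R_0^*$. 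Since the hypotheses place us strictly to the right of all these critical values, the count is constant on $(R_0^*,\infty)$ and hence equals its value just past $R_0^*$, namely one.

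The hypothesis $R_\phi>1$ enters to guarantee that the two folds $R_c<R_0^*$ actually exist and are ordered as claimed, consistently with the necessary conditions $R_\phi>1$ isolated in Theorems~\ref{prop3} and~\ref{prop4}; without it the fold structure underlying $R_0^*$ degenerates and the threshold is not defined. Assembling the three sign facts $x_3>0$, $f(0)<0$, and ``exactly one real root for $R_0>R_0^*$'' then yields a unique positive equilibrium.

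The step I expect to be the main obstacle is controlling the discriminant $\Delta(R_0)$, which once $R_\mu,R_\phi,\nu$ are fixed is a quartic in $R_0$: one must rule out further real zeros of $\Delta$ in $(R_0^*,\infty)$ that would recreate a positive fold and a second positive equilibrium. The clean way around a global discriminant computation is to argue locally at the fold, namely that the two colliding roots genuinely move off the real axis at $R_0^*$ and that no positive double root recurs, equivalently that $f'(a)=3x_3a^2+2x_2a+x_1$ cannot produce two positive critical points straddling a zero of $f$ once $R_0>R_0^*$. Either formulation reduces the theorem to a single parameter-explicit inequality, which is routine to verify once set up.
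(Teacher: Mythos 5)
Your argument is essentially the paper's: both rest on the claim that for $\mathcal{R}_0>\mathcal{R}_0^*$ the cubic $f$ has a single real root, which is then positive because $x_3>0$ and $f(0)=x_0=R_\mu^2(1-\mathcal{R}_0)<0$. The one difference is that you explicitly flag, without discharging, the need to rule out further fold points (discriminant zeros) on $(\mathcal{R}_0^*,\infty)$ --- a step the paper also leaves unverified, since it simply takes $R_c$ and $\mathcal{R}_0^*$ to be the only two values of $\mathcal{R}_0$ at which $f$ has a double root.
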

\begin{proof}
If $\mathcal{R}_0>\mathcal{R}_0^*$, then the polynomial $f$ in Equation~\ref{eq:polynomial} has a single real root $a_+$. Since $\mathcal{R}_0>1$, this implies that $f(0)=\mu^2(1-\mathcal{R}_0)<1$, which implies $a_+>0$.
\end{proof}

\subsection{Effect of reducing the relapse rate and the willingness factor}
We first explore the impact that relapse rate, $\phi$, has on the model. Reducing the relapse rate while still maintaining $\mathcal{R}_\phi>1$ results in a change in the behavior of the system as shown in Figure~\ref{newrho}. 
%shows the effect of reducing $\phi$ a comparison of the original forward-backward bifurcation (\ref{newrho}a) with the new one (\ref{newrho}b). 
A lower relapse rate causes the bifurcation to shift to the right, where $\mathcal{R}_0<1$ guarantees a infectious-free equilibrium. This highlights the crucial role that relapse plays in addicted population dynamics. If relapse can be lowered below a critical threshold, the infectious population may be managed by just controlling $R_0$.

\begin{figure}[htb!]
\begin{center}
\includegraphics[width=.4\linewidth]{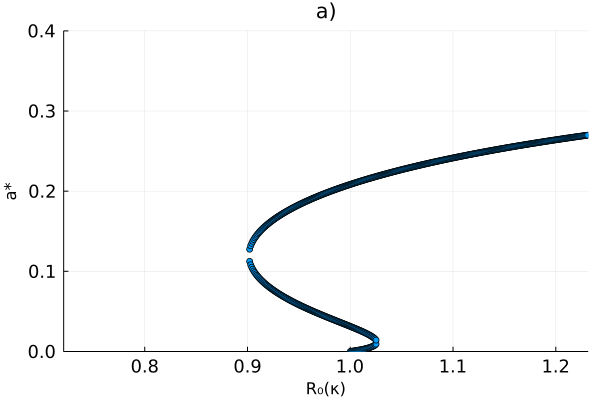}
\includegraphics[width=.4\linewidth]{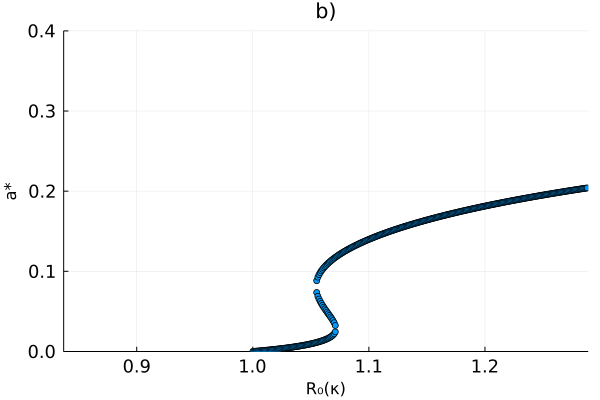}
\end{center}
\caption{Forward-backward bifurcation with parameters $\mu=0.00015$, $\gamma=0.0027$, $\beta=0.009$, $\nu=0.8$, $\kappa$ varies, and (a) $\phi=0.0044$, $\mathcal{R}_\phi=1.54$ ; (b) $\phi=0.004$, $\mathcal{R}_\phi=1.4035$.}
\label{newrho}
\end{figure}

Varying the willingness factor, $\nu$, yields significant changes in the behavior of the model; see Figure~\ref{alphamulti} where we show the bifurcation diagrams for four different values of $\nu$. Low values of $\nu$ (indicative of low interaction between the reformed and {\it at-risk} classes) yields a backward bifurcation similar to Figure~\ref{alpha0}. As $\nu$ increases, the system moves through the state shown in Figure~\ref{newrho}(b) and continues to shift to the right. When $\nu=1$, the state qualitatively resembles Figure~\ref{newrho}(b), for which $\mathcal{R}_0<1$ guarantees stability for the addiction-free equilibrium. This implies that reformed individuals helping {\it at-risk} individuals have the potential to significantly impact the long-term addicted population size, despite high relapse rates ($\mathcal{R}_\phi>1$). 

\begin{figure}[htb!]
\begin{center}
\includegraphics[width=.4\linewidth]{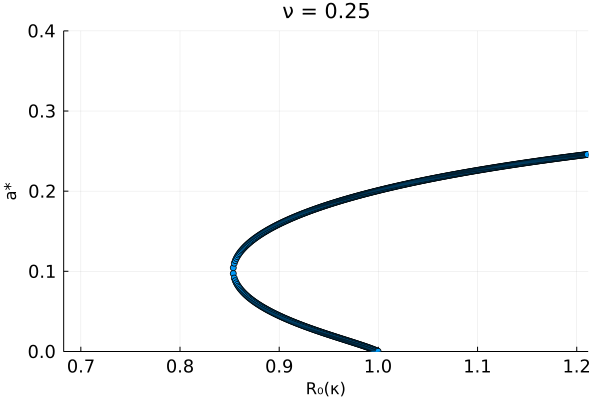}
\includegraphics[width=.4\linewidth]{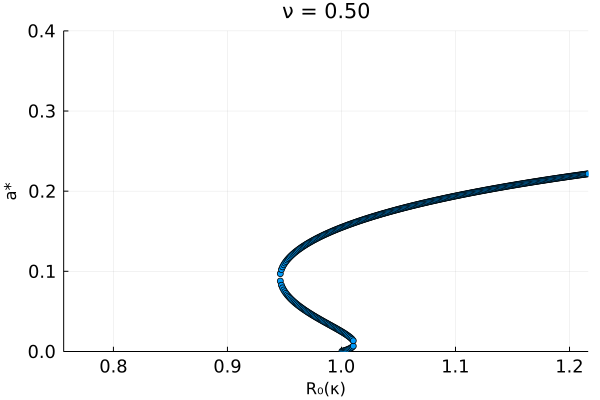}\\
\includegraphics[width=.4\linewidth]{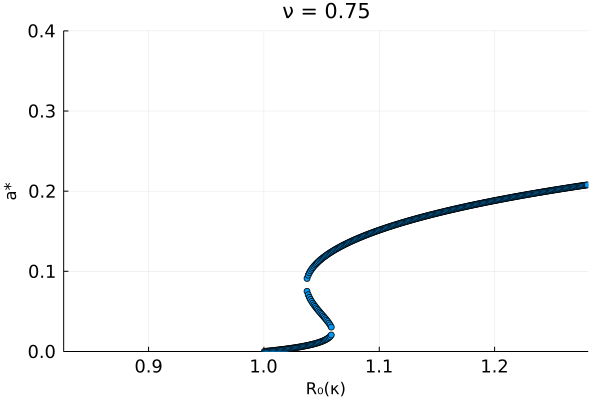}
\includegraphics[width=.4\linewidth]{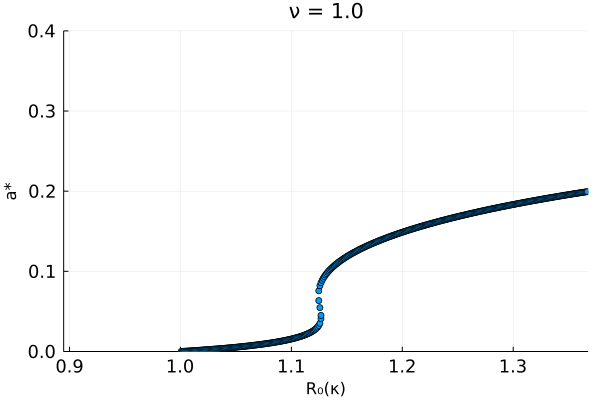}
\end{center}
\caption{Bifurcation diagrams for varying values of $\nu$ with parameters $\mu=0.00015$, $\gamma=0.0027$, $\beta=0.009$, $\phi=0.004$, $\kappa$ varies and $\nu$ as shown on each graph.}
\label{alphamulti}
\end{figure}

\section{Numerical results} \label{sec:stochastic}
Besides the deterministic model \eqref{eqODEs}, we consider also a discrete stochastic model in order to compare the behaviour of both models and the dependence on the initial conditions. For the stochastic model, probability rates between states  are given in Table \ref{tab:transitions}, which are straightforwardly obtained from \eqref{eqODEs}. We only replace the recruitment rate by a constant $\Lambda$, chosen initially as $\Lambda = N(0)\mu$, in order to essentially have a constant population. For simplicity, we take $t\in\{0,\Delta t, 2\Delta t, 3\Delta t, \ldots\}$ and the number of events on each time step $\Delta t$ is assumed to follow a Poisson distribution with mean equal to the rates shown in Table \ref{tab:transitions}.% The recruitment rate  %, which is a good approximation for... \cite{linda}.

\begin{table}[h!]
\centering 
\caption{Transition rates for the stochastic model similar to \eqref{eqODEs}\label{tab:transitions}}
\begin{tabular}{l|l|l}
Event       & Effect & Rate \\ \hline
Addiction   &  $A\leftarrow A+1$, $S\leftarrow S-1$  & $\beta g(\Tilde{S}) S A/N$ \\
Relapse &  $A\leftarrow A+1$, $\Tilde{S}\leftarrow S-1$& $\phi A \Tilde{S}/N$                             \\
Recovery    & $\Tilde{S}\leftarrow \Tilde{S}+1$, $A\leftarrow A-1$ & $\gamma A$ \\
Recruitment & $S\leftarrow S + 1$ & $\Lambda$ \\
Exit from $S$& $S\leftarrow S-1$ & $\mu S$                                          \\
Exit from $A$& $A\leftarrow A-1$ & $\mu A$                                          \\
Exit from $\Tilde{S}$& $\Tilde{S}\leftarrow \Tilde{S}-1$ & $\mu \Tilde{S}$                                 
\end{tabular}
\end{table}

%\subsection{Numerical results}
Numerical simulations allow us to examine the addicted population dynamics over time for each region depicted in Figure \ref{genbif}; see Figures \ref{stochasticModel1}, \ref{stochasticModel2}, \ref{stochasticModel3}, \ref{stochasticModel4}. We consider three different populations, labeled \textit{low} ($N\approx 10,000$), \textit{medium} ($N\approx 100,000$) and \textit{high} ($N\approx 1,000,000$). Results for the stochastic model are scaled by $N(t)$. We include the deterministic solution $a(t)$ in order to compare the time series for both models.

Figure~\ref{stochasticModel1} (region 1) shows that the addicted population will, over time, decrease to the addicted-free equilibrium, for both stochastic and deterministic models, independent of the total population size. We observe more variability among simulations with smaller populations, as expected.

\begin{figure}[htb!]
\begin{center}
\includegraphics[width=.3\linewidth]{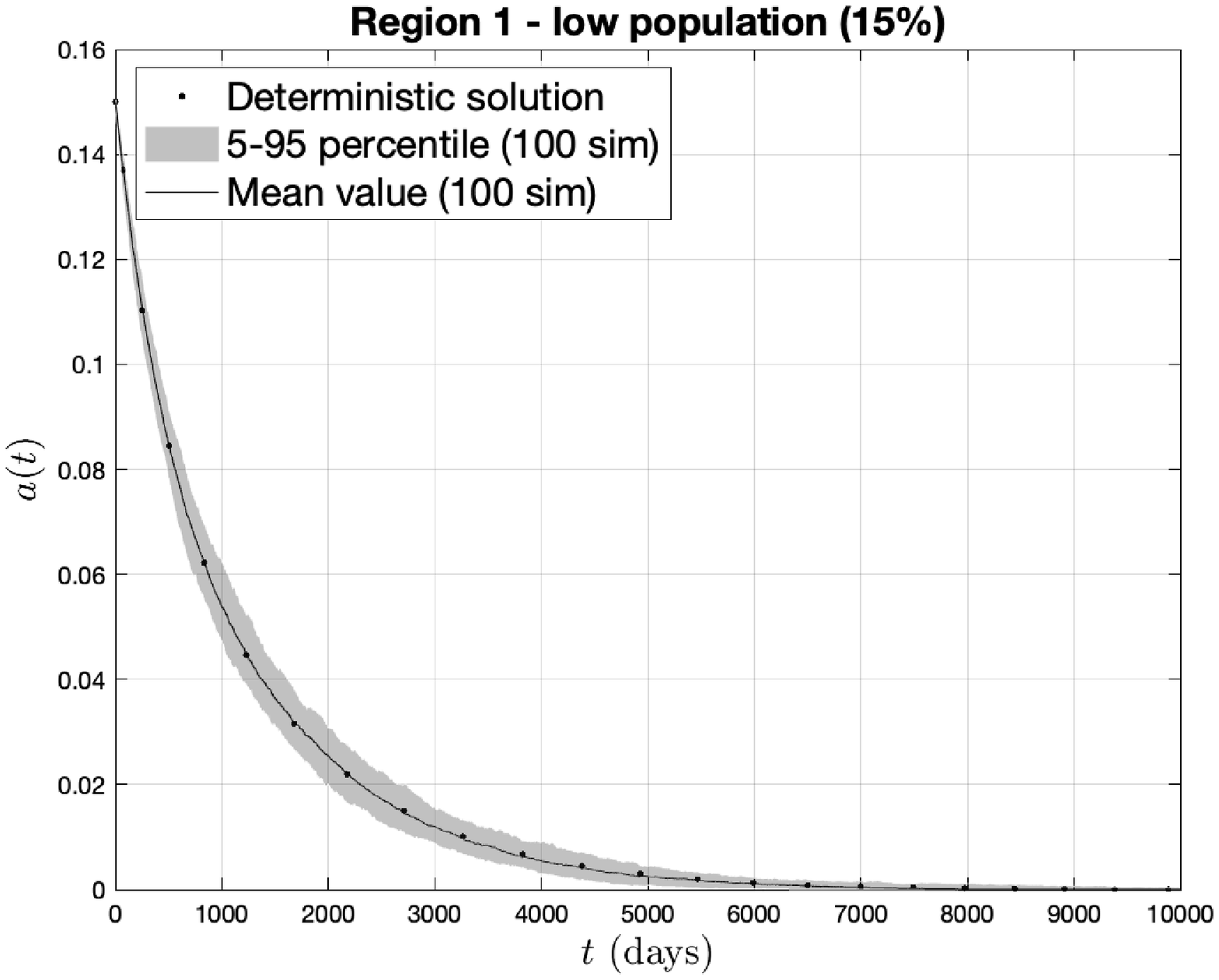} %\subcaption{$N\approx 10,000$}
\includegraphics[width=.3\linewidth]{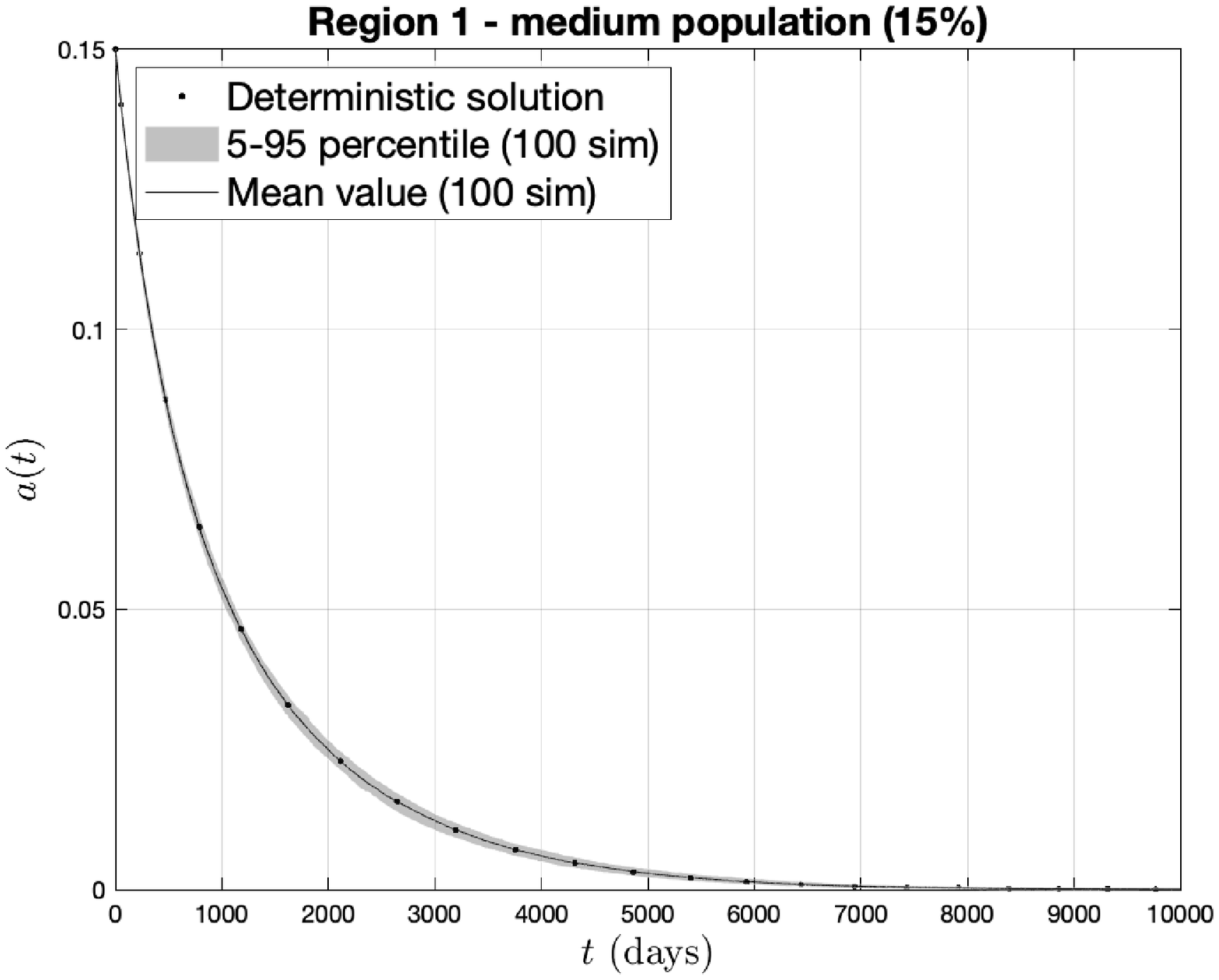} %\subcaption{$N\approx 100,000$}
\includegraphics[width=.3\linewidth]{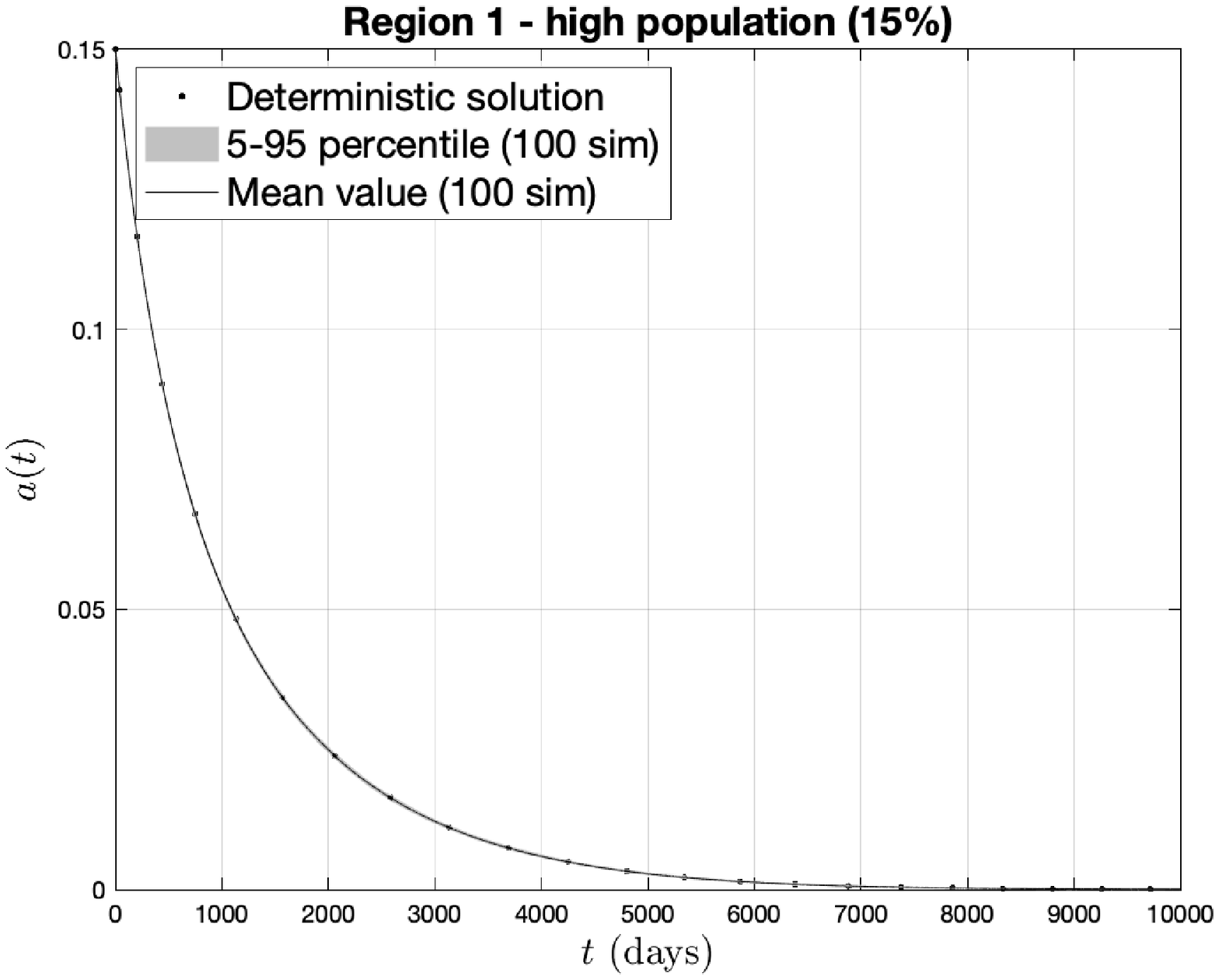} %\subcaption{$N\approx 1,000,000$}
\end{center}
\caption{Addicted population time series for Region 1 of the forward-backward bifurcation, with parameters $\mu=0.00015$, $\beta=0.009$, $\gamma=0.0027$, $\kappa=0.2$, $\nu=0.8$, $\phi=0.0044$, $R_0=0.6315$, $R_\phi=1.5439$, $I(0)/N(0)=0.15$. We present the mean $I(t)/N(t)$ for the stochastic model (100 simulations) for (a) low, (b) medium, and (c) high populations. Gray shaded region correspond to the 5th and 95th percentiles. Blue dots correspond to the deterministic solution. \label{stochasticModel1}}
\end{figure}

Figure~\ref{stochasticModel2} (region 2) shows that the addicted population for the deterministic model will either establish itself or decrease to the addicted-free equilibrium, depending on the initial addicted population size. For the stochastic model, a similar behaviour occurs when the initial addicted population is small (1\%). Nevertheless, it the total population is small enough, it is possible to obtain an addicted-free state. We observe a large variation among simulations when there is an initial addicted compartment with 10\% of the total population in the low population case.

\begin{figure}[htb!]
\begin{center}
\includegraphics[width=.3\linewidth]{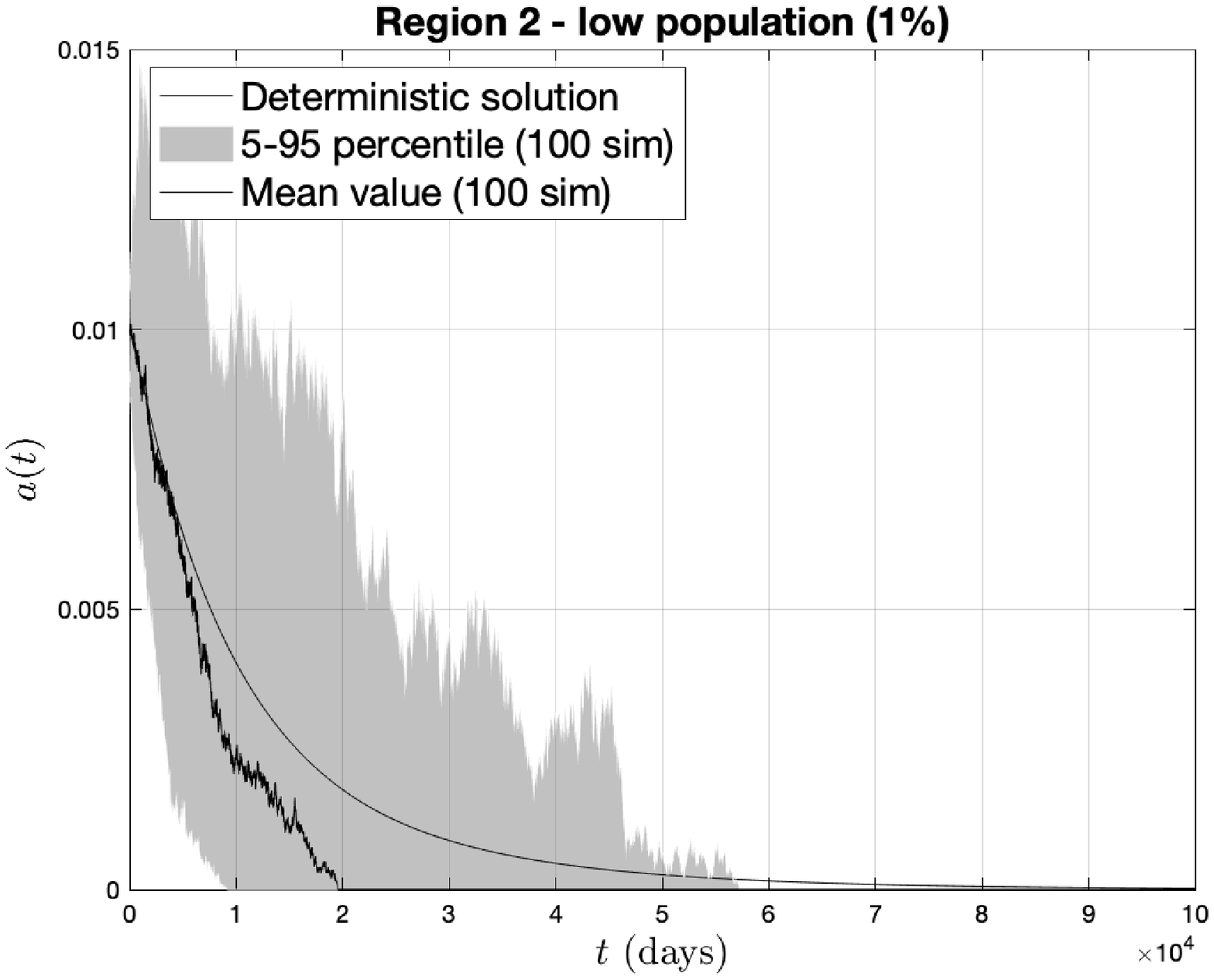}
\includegraphics[width=.3\linewidth]{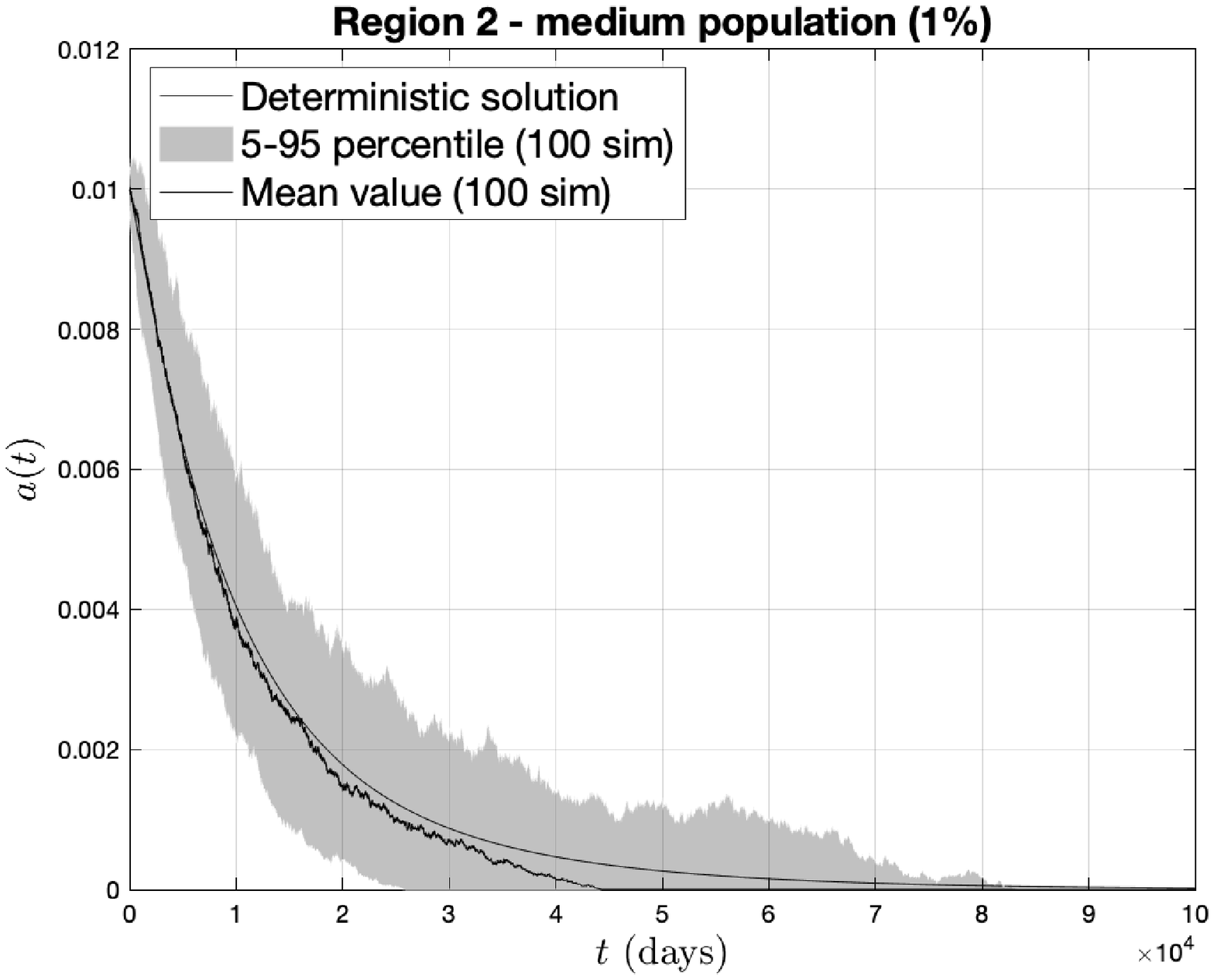}
\includegraphics[width=.3\linewidth]{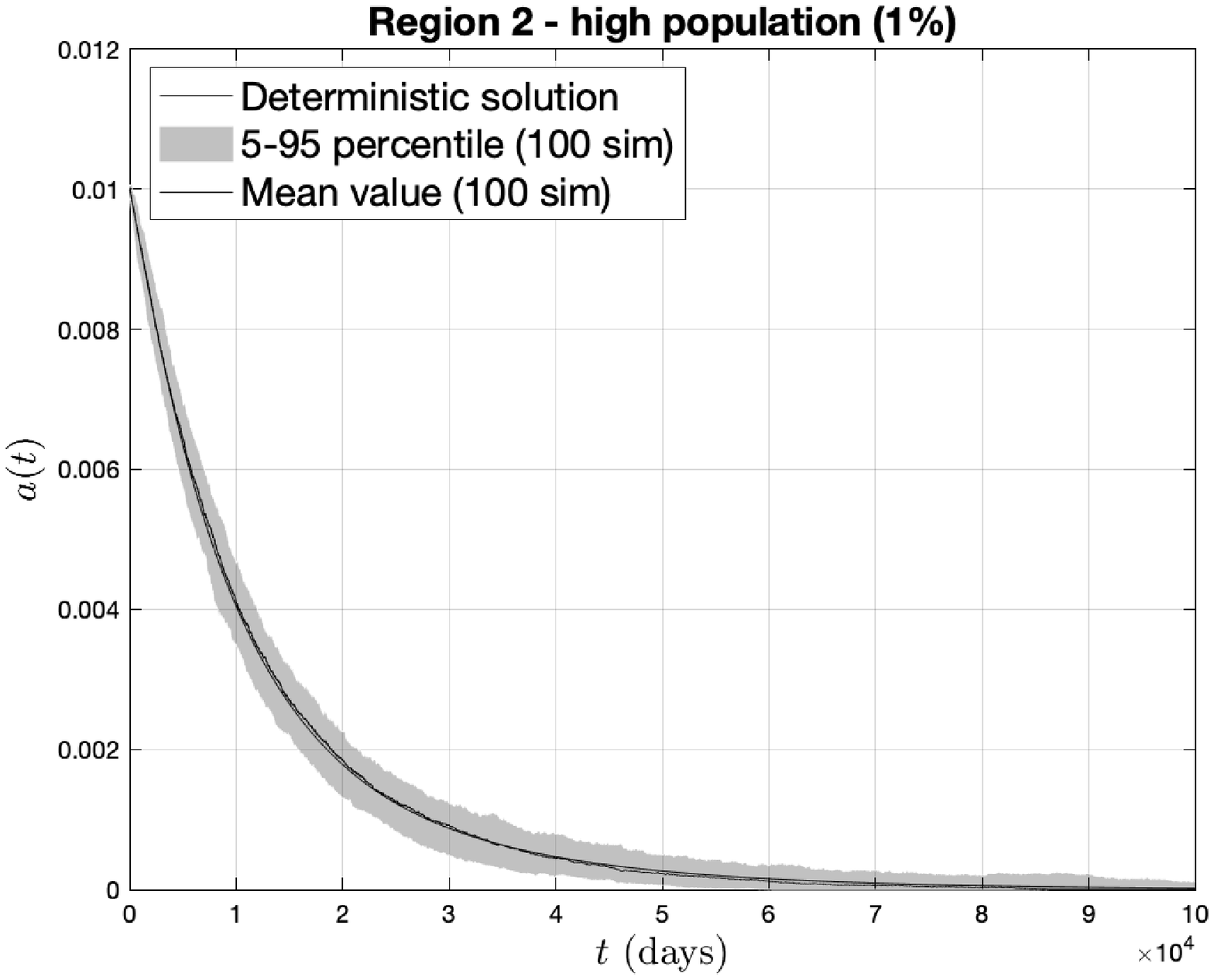}
\includegraphics[width=.3\linewidth]{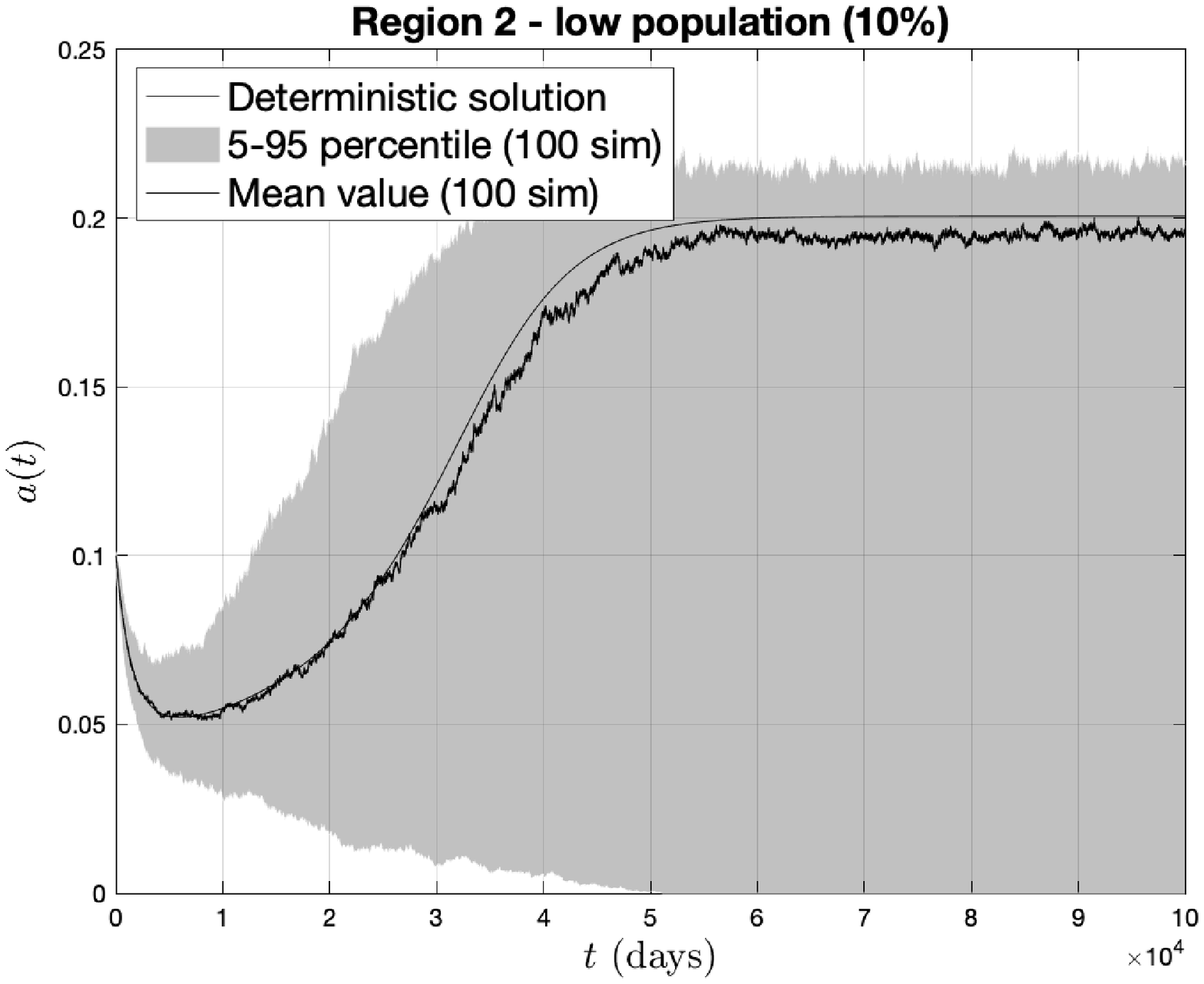}
\includegraphics[width=.3\linewidth]{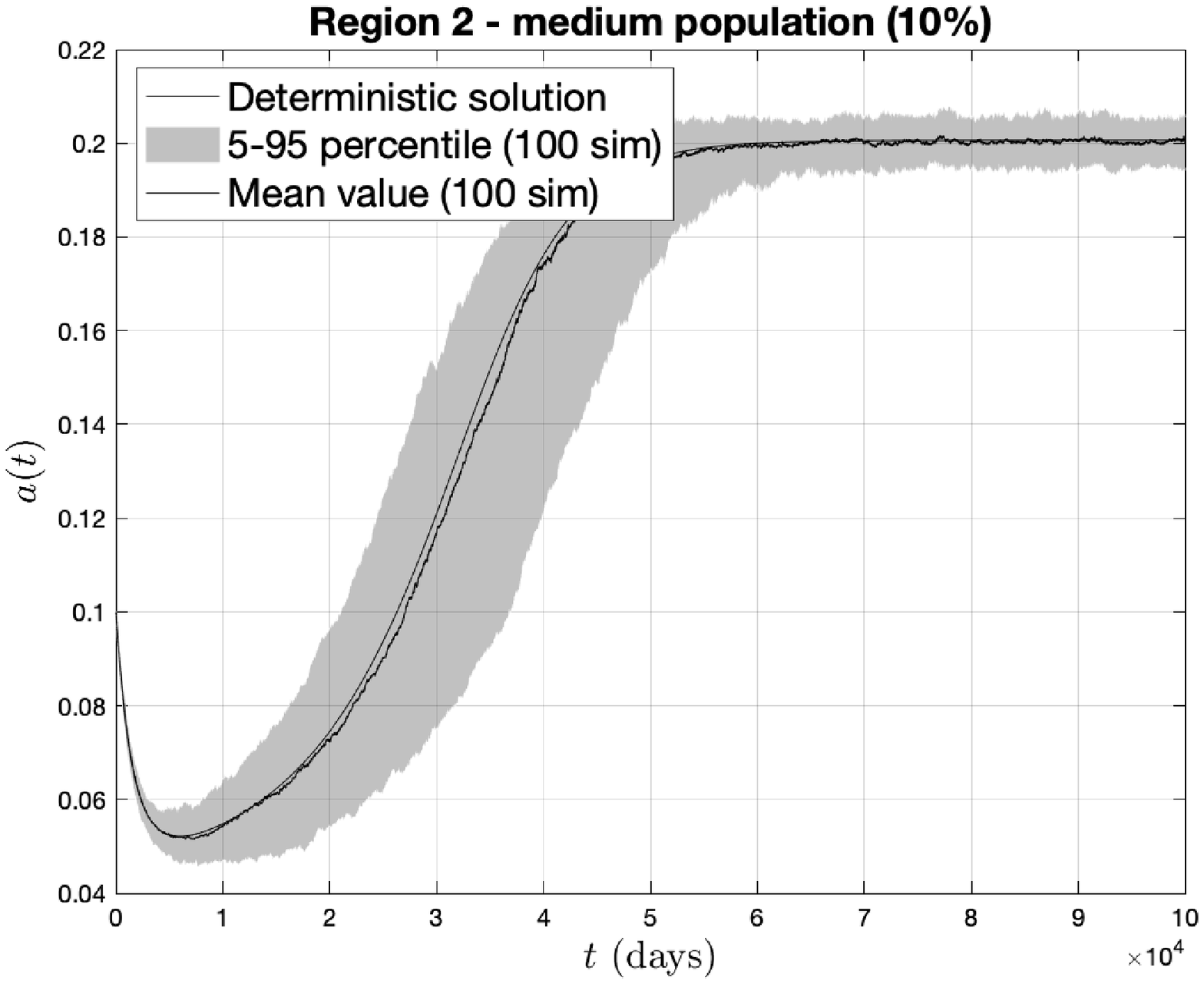}
\includegraphics[width=.3\linewidth]{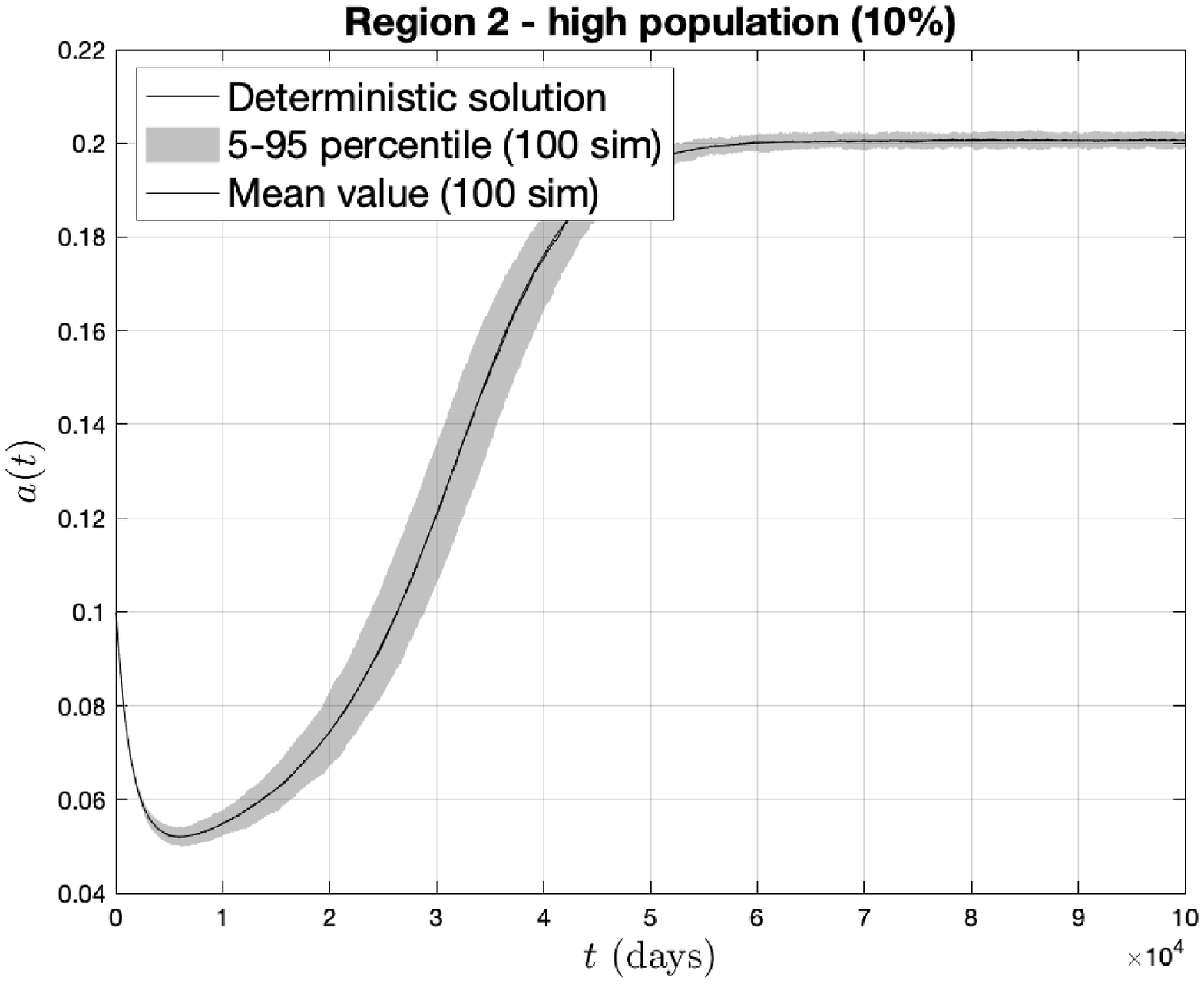}
\end{center}
\caption{Addicted population time series for Region 2 of the forward-backward bifurcation, with parameters $\mu=0.00015$, $\beta=0.009$, $\gamma=0.0027$, $\kappa=0.3111$, $\nu=0.8$, and $\phi=0.0044$, $R_0=0.9824$, $R_\phi=1.5439$. We present the mean $I(t)/N(t)$ for the stochastic model (100 simulations) for (a) low, (b) medium, and (c) high populations, with $I(0)/N(0)=0.01$ (top) and $I(0)/N(0)=0.10$ (bottom). Gray shaded region correspond to the 5th and 95th percentiles. Blue lines correspond to the deterministic solution.\label{stochasticModel2}}
\end{figure}

Figure~\ref{stochasticModel3} (region 3) shows that the addicted population will establish itself at either a large or a small size, once again depending on the initial addicted population size. Figures~\ref{stochasticModel2} and \ref{stochasticModel3} illustrate sensitivity to initial conditions for the deterministic model; whether the addicted community is established, and how large the community is, is dependent on how pervasive the initial population of addicted individuals is. For medium and large populations, the stochastic models preserves the qualitative behaviour of the deterministic curves. Nevertheless, in the low population cases, there are cases where a addicted-free state is reached.

\begin{figure}[htb!]
\begin{center}
\includegraphics[width=.3\linewidth]{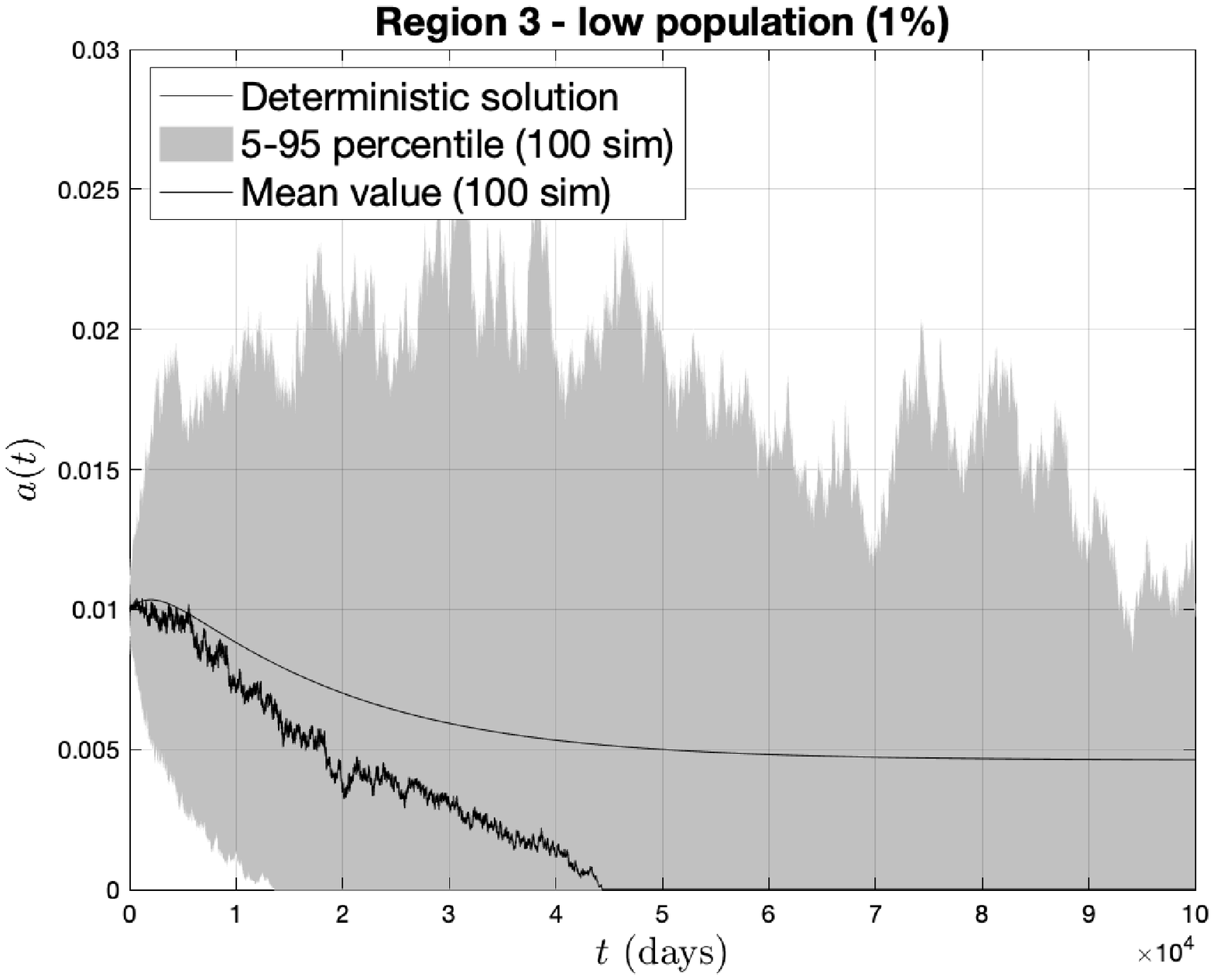}
\includegraphics[width=.3\linewidth]{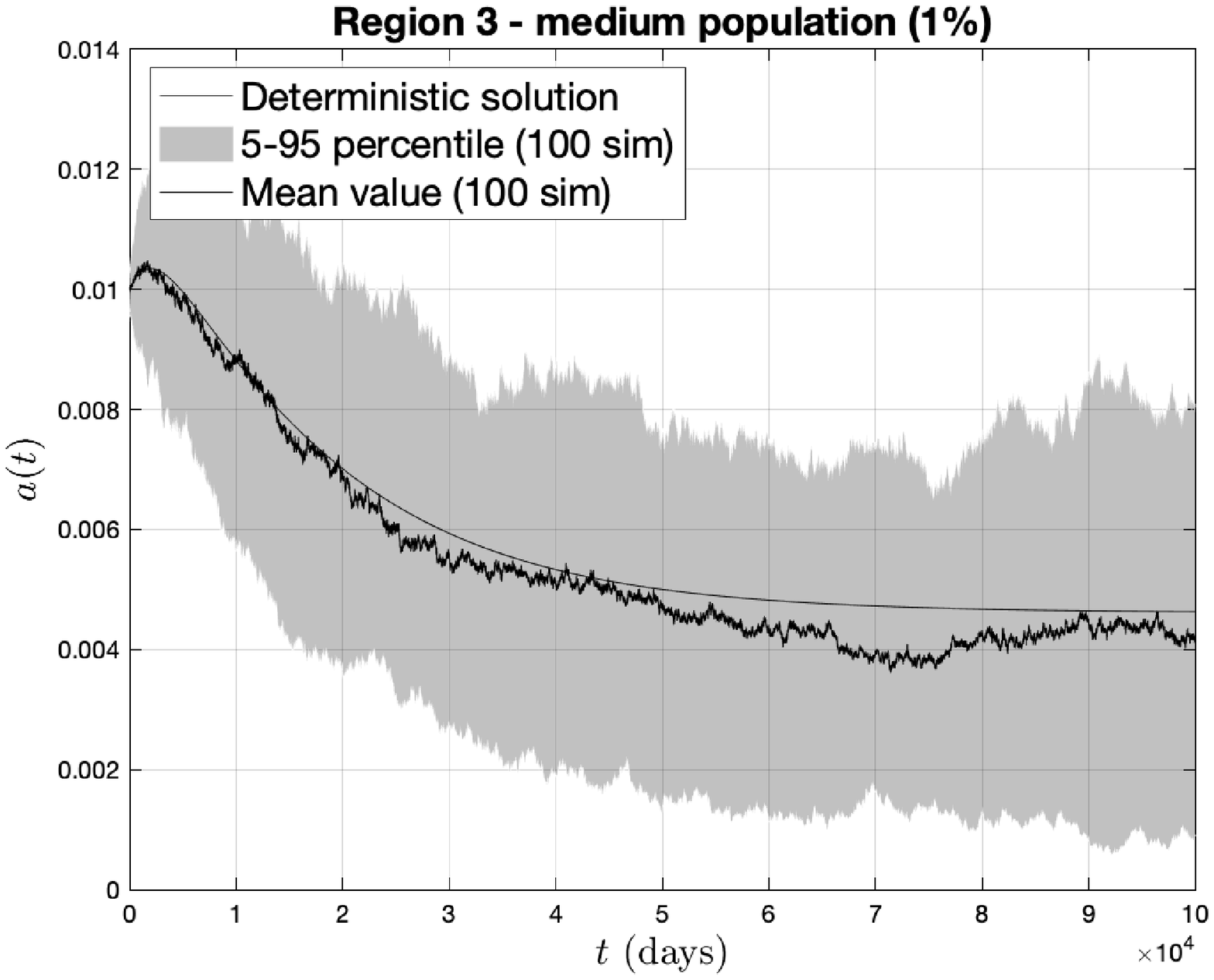}
\includegraphics[width=.3\linewidth]{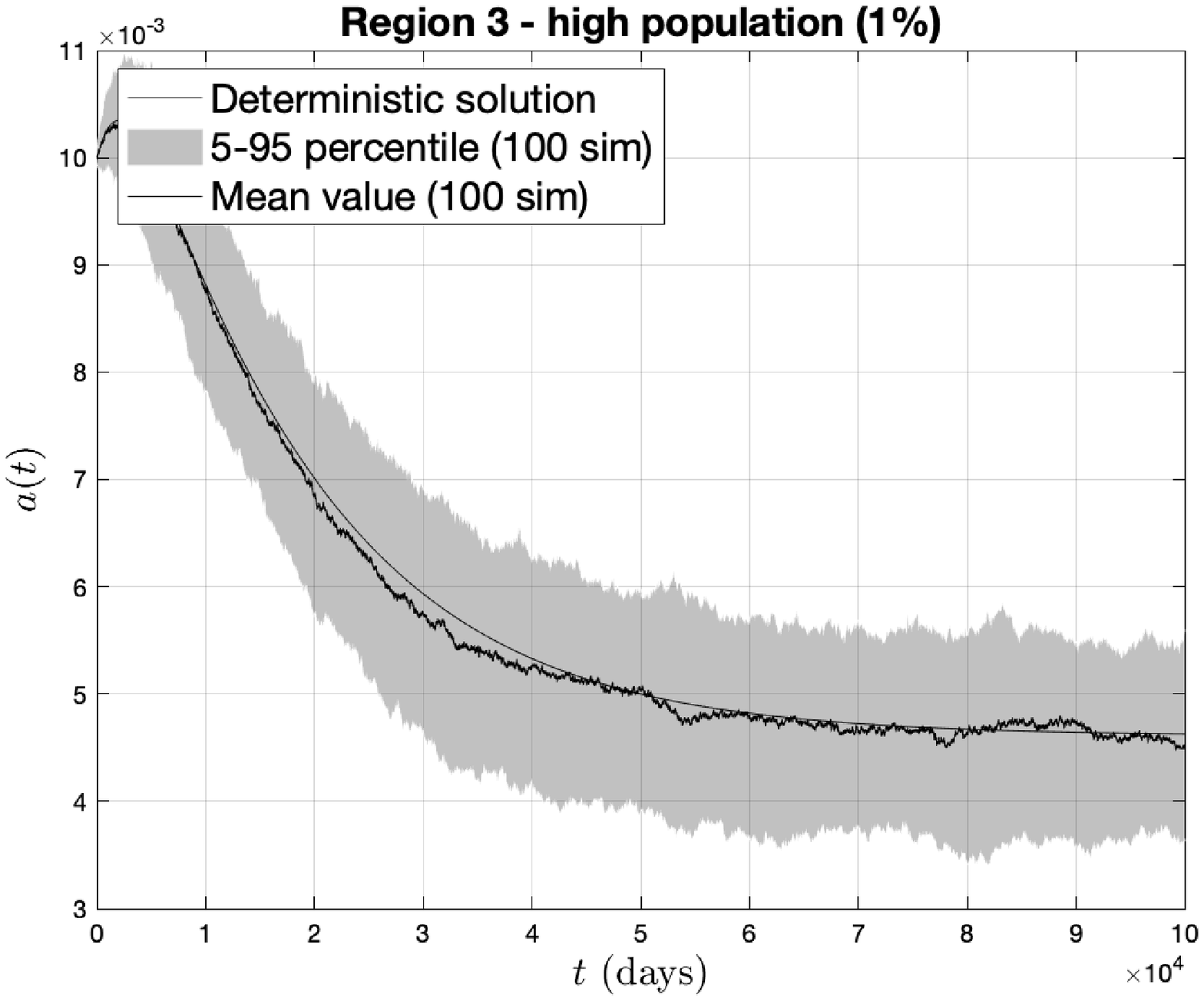}
\includegraphics[width=.3\linewidth]{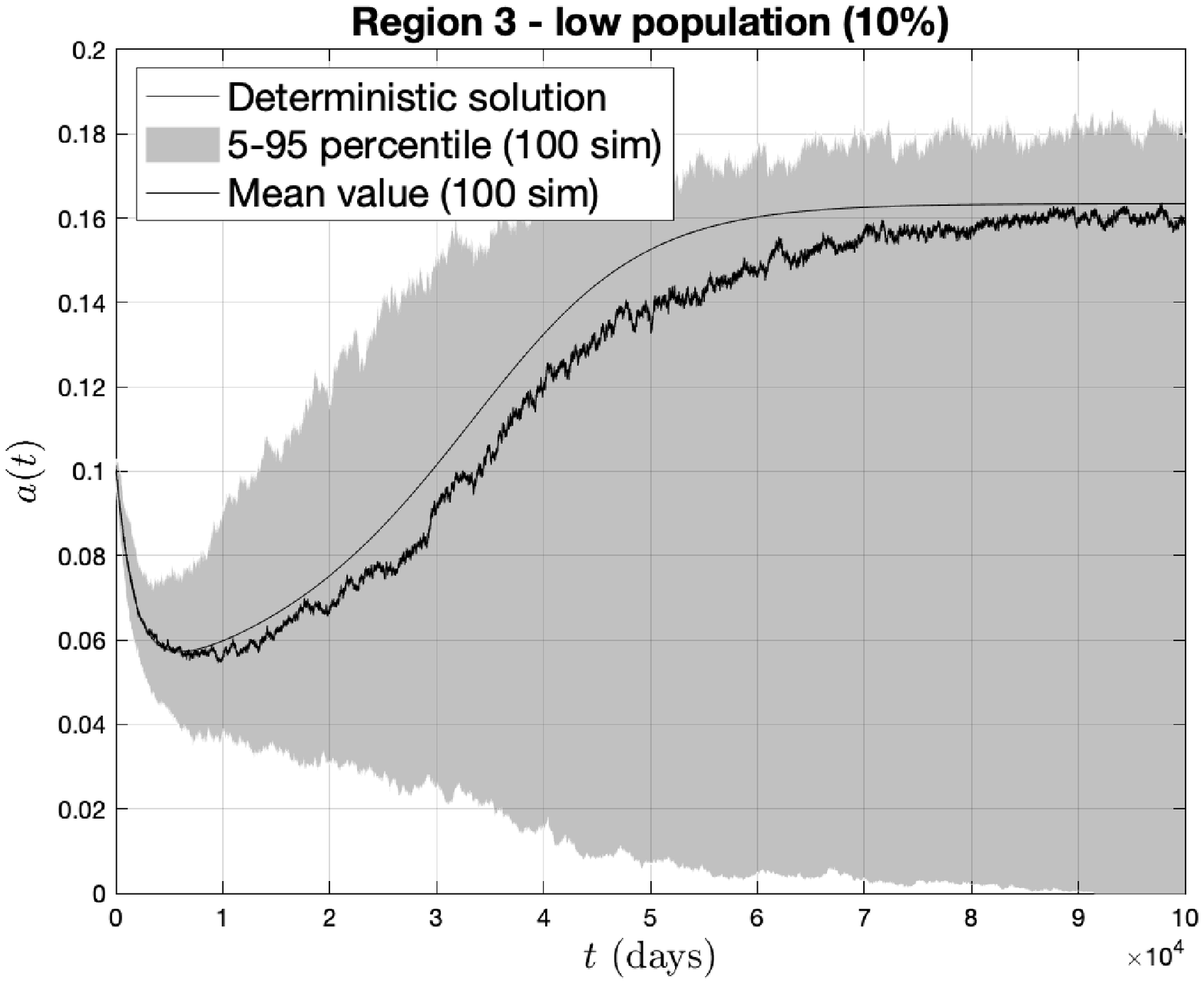}
\includegraphics[width=.3\linewidth]{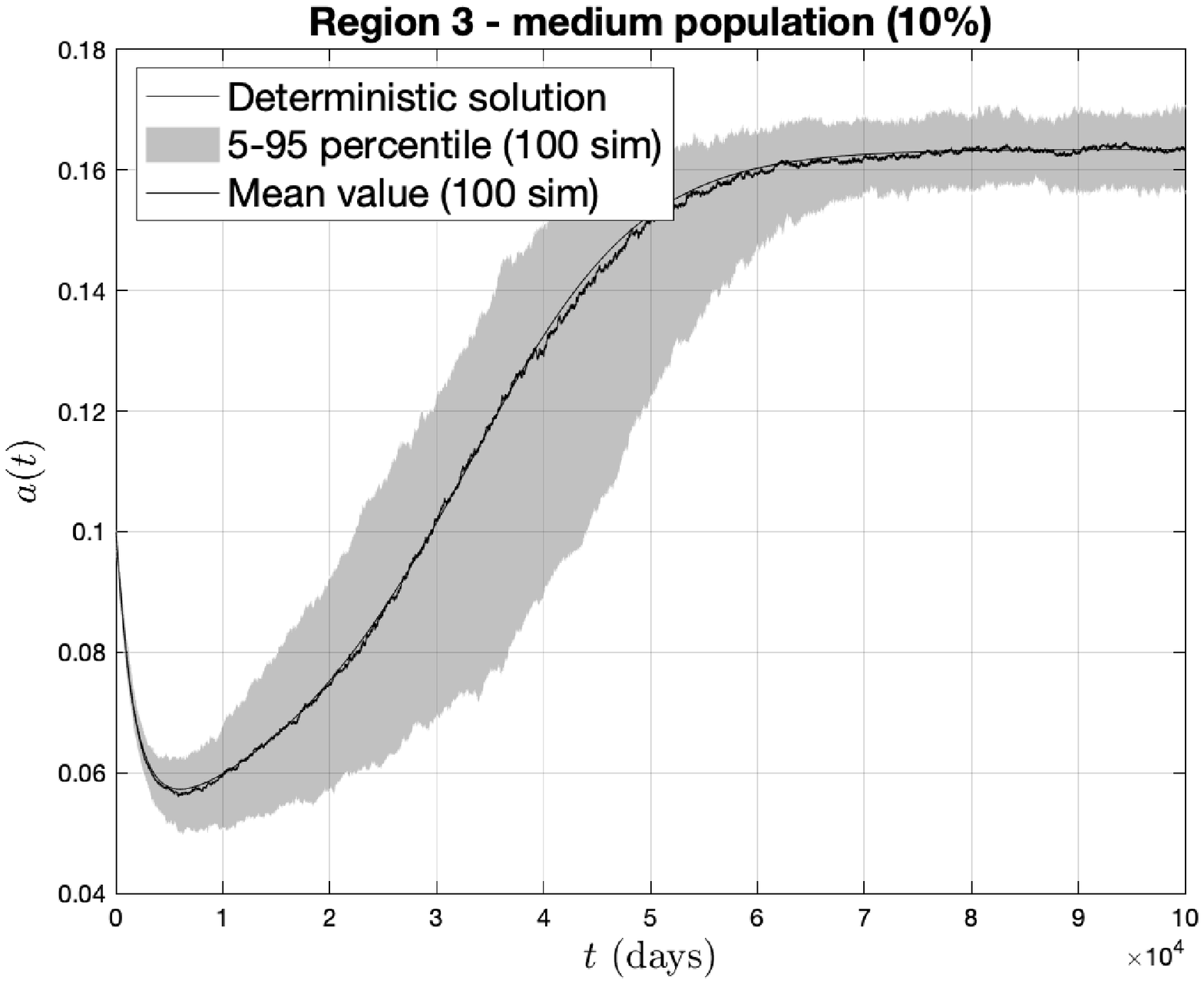}
\includegraphics[width=.3\linewidth]{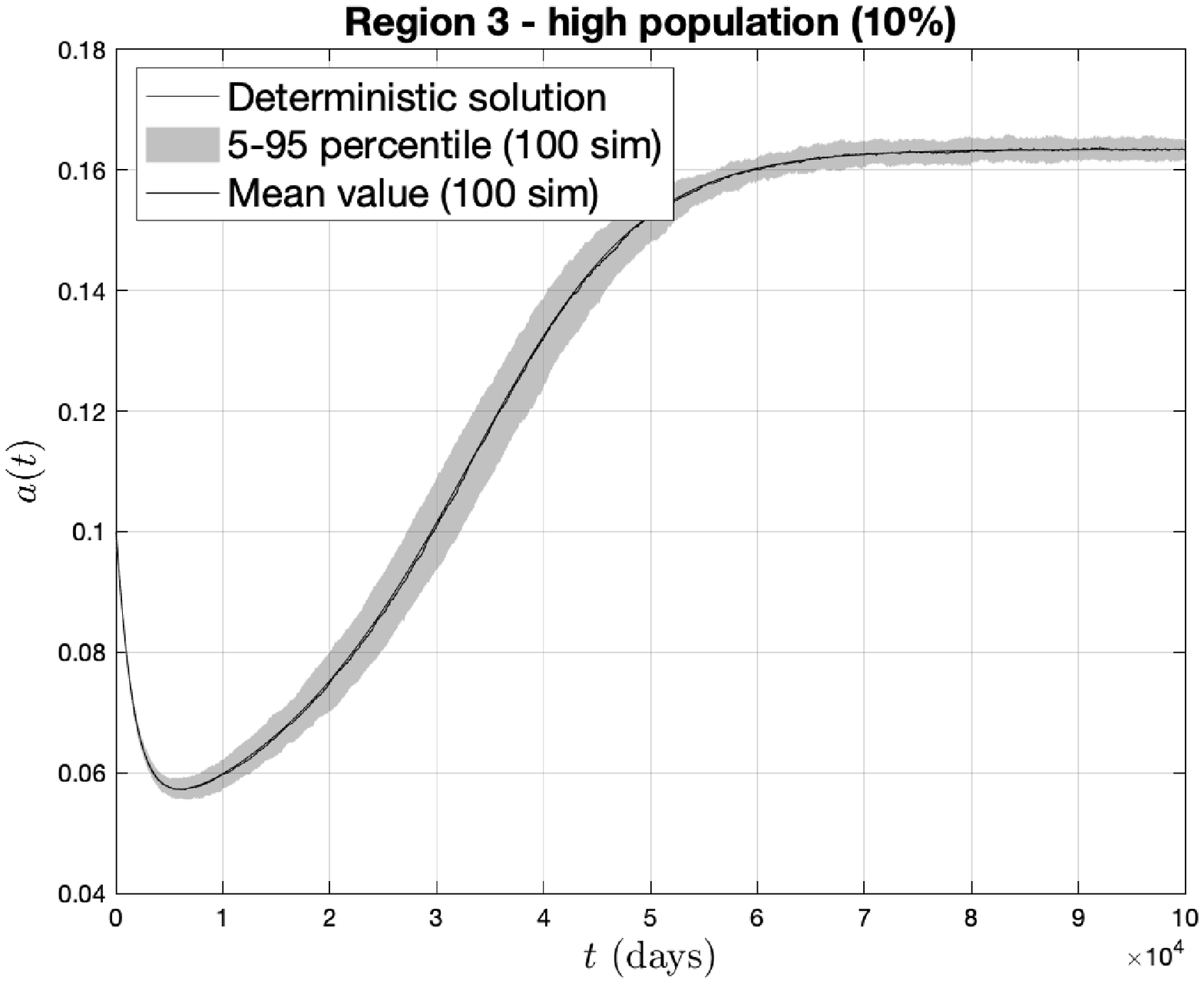}
\end{center}
\caption{Addicted population time series for Region 3 of the forward-backward bifurcation, with parameters $\mu=0.00015$, $\beta=0.009$, $\gamma=0.0027$, $\kappa=0.3243$, $\nu=0.8$, $\phi=0.0042$, $R_0=1.0241$, $R_\phi=1.4736$. We present the mean $I(t)/N(t)$ for the stochastic model (100 simulations) for (a) low, (b) medium, and (c) high populations, with $I(0)/N(0)=0.01$ (top) and $I(0)/N(0)=0.10$ (bottom). Gray shaded region correspond to the 5th and 95th percentiles. Blue lines correspond to the deterministic solution. \label{stochasticModel3}}
\end{figure}

Finally, Figure~\ref{stochasticModel4} (region 4) shows that the addicted population will establish itself at a (relatively) large population size despite a very small initial addicted population for the deterministic model. This occurs largely because the effects of relapse outweigh the attempts to discourage addicted involvement. Again, if the total population is small enough, an addicted-free state is possible, even though the mean of 100 simulations is close to the deterministic curve.

\begin{figure}[htb!]
\begin{center}
\includegraphics[width=.3\linewidth]{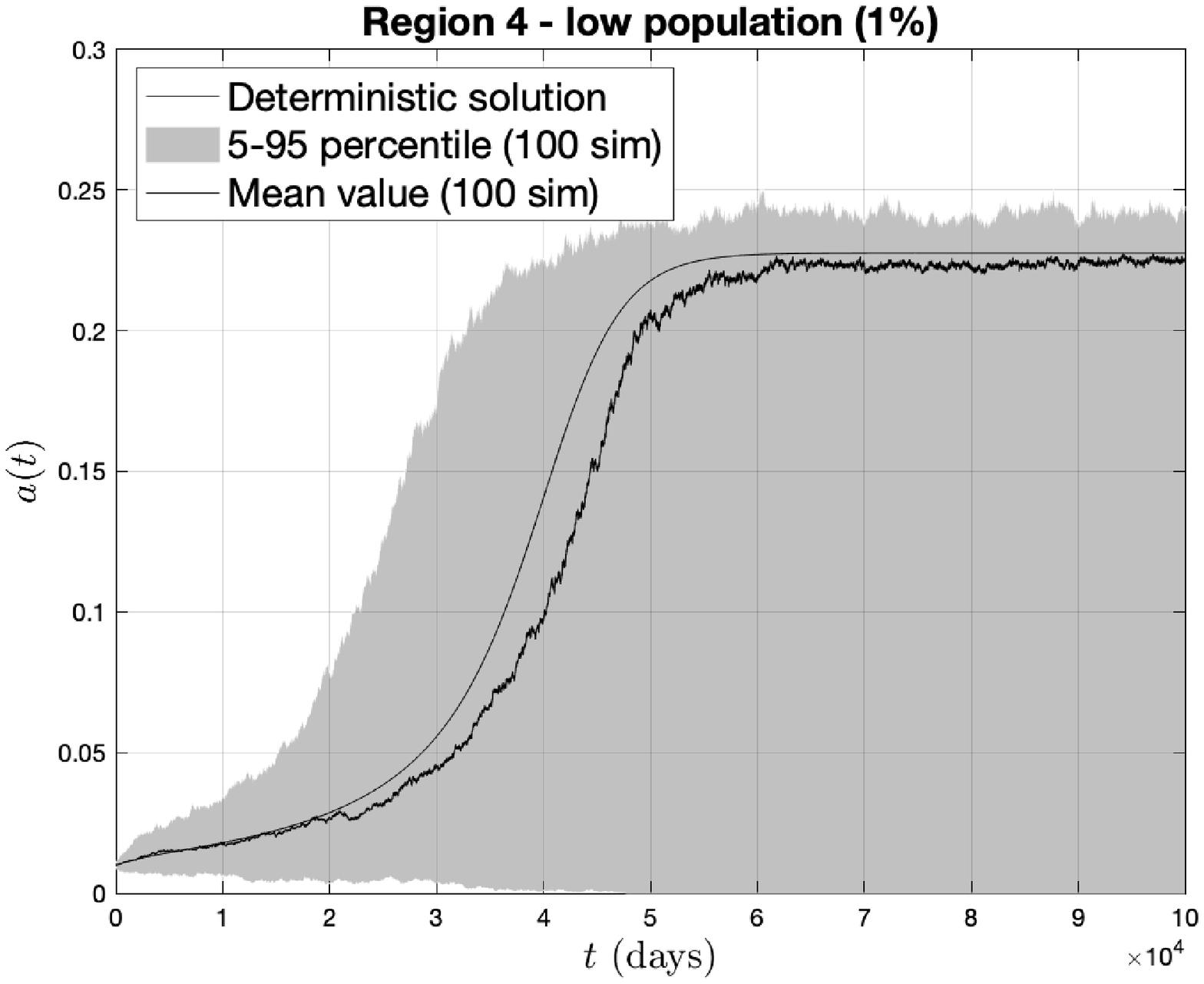}
\includegraphics[width=.3\linewidth]{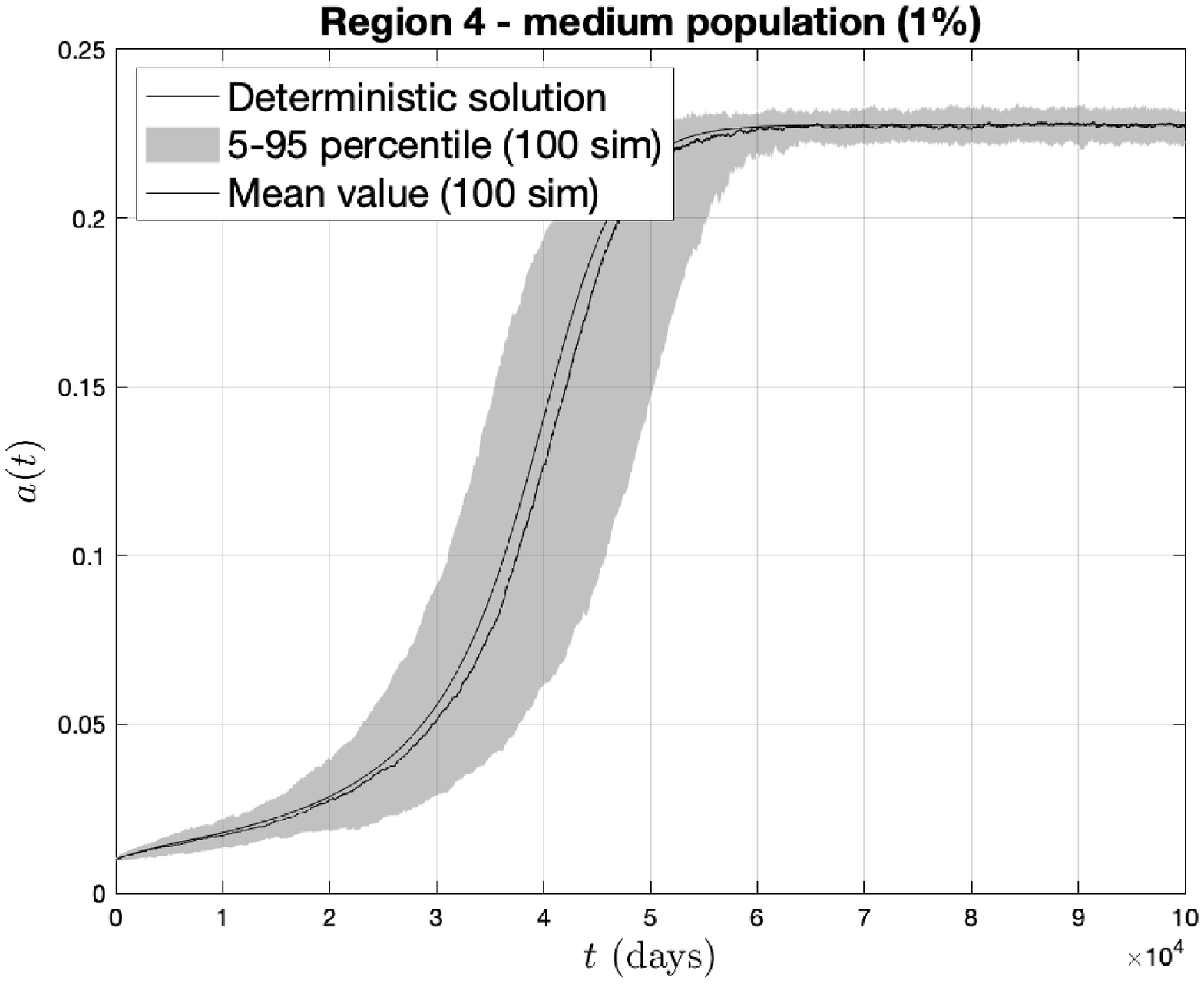}
\includegraphics[width=.3\linewidth]{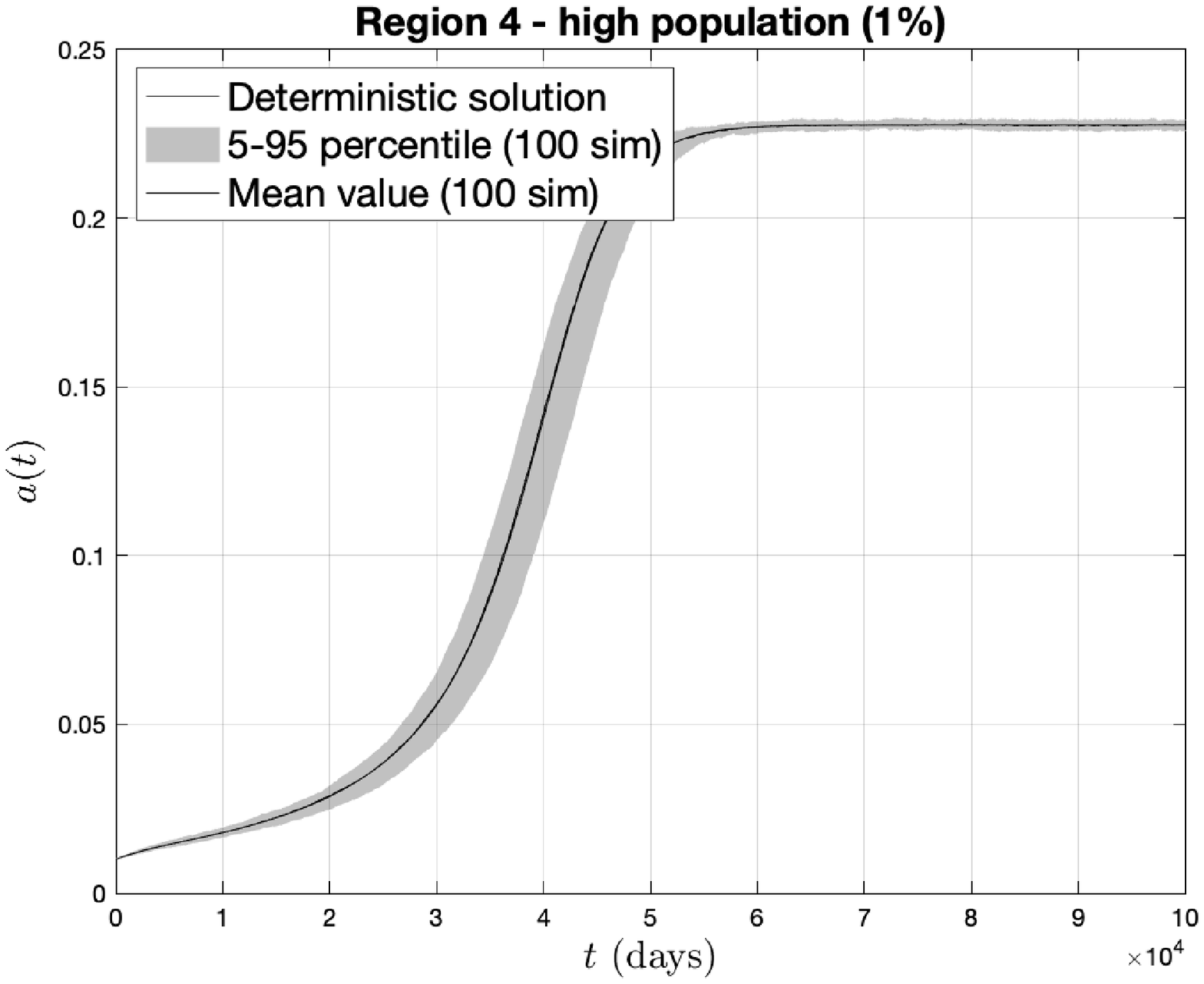}
\end{center}
\caption{Addicted population time series for Region 4 of the forward-backward bifurcation, with parameters $\mu=0.00015$, $\beta=0.009$, $\gamma=0.0027$, $\kappa=0.3333$, $\nu=0.8$, $\phi=0.0044$, $R_0=1.0525$, $R_\phi=1.5439$, $I(0)/N(0)=0.01$. We present the mean $I(t)/N(t)$ for the stochastic model (100 simulations) for (a) low, (b) medium, and (c) high populations. Gray shaded region correspond to the 5th and 95th percentiles. Blue dots correspond to the deterministic solution.\label{stochasticModel4}}
\end{figure}

\section{Discussion} \label{sec:disc}
We explored an epidemic-type model that includes nonlinear relapse in the temporarily recovered population. Results showed high sensitivity to the initial addicted population. If the initial addicted population in an at-risk environment is large enough, in other words, already established, then the addicted population is more likely to establish itself at an endemic equilibrium. Hence, well-established addicted populations play prominent social, political, and economic roles in the community and are thus much more difficult to control. 
Our model indicates that reformed individuals play a crucial role in addicted population dynamics when relapse rates are low. A large proportion of reformed individuals serving as mentors has the potential to significantly reduce the addicted population, given that relapse rates remain under control. A lack of opportunities could lead reformed individuals to return to addiction. Rehabilitation programs that aim to reintegrate reformed individuals into productive individuals in society must offer consistent supervised rehabilitation.  

While focusing on reformed individuals is essential in reducing the addicted population, recruitment into the addicted population is a major player in reducing the addicted population. The cost of becoming addicted plays a role in the {\it basic reproductive number}, $\mathcal{R}_0$, which implies that changes in $\kappa$ have significant implications in the transmission dynamics of the system. Furthermore, informing susceptible individuals about other lifestyles and opportunities, such as education, can discourage individuals from getting involved in risky environments where they can ultimately be pulled into addiction.  

Moreover, if the social influence of the reformed individuals on the {\it at-risk} susceptible population is strong, the long-term addicted population becomes manageable. For a specific cost ($\kappa$), a small region exists where $1<\mathcal{R}_0<\mathcal{R}_0^*$ and $\mathcal{R}_\phi>1$ with multiple stable addicted populations that are highly dependent on the initial addicted population size. If the initial addicted population in an {\it at-risk} environment is large enough, in other words, already established, then the addicted population establishes itself at the higher endemic equilibrium. Our model shows the influence of established problem communities, and it highlights the importance of prevention programs and relapse rates. 

While reformed individuals may impact addicted population dynamics, other factors, such as cost and relapse rate, play a role in the effectiveness of reformed individuals in population control. When relapse rates are low, reformed individuals play a crucial role in addicted population dynamics. A high value of $\nu$ can shift the forward-backward bifurcation to the point where $\mathcal{R}_0<1$ produces an addicted-free equilibrium. This highlights the importance of keeping relapse rates under control and encouraging reformed individuals to become involved with addiction prevention programs. 

From our model, we also found that the cost of addiction significantly impacts the addicted population dynamics. Cost is a factor in the basic reproductive number $\mathcal{R}_0$, which means that changes in this value may have significant implications for the addicted population. If costs are low to get into addiction, there is little that reformed individuals can do to decrease the growth in the addicted population. An alternative to lowering the cost of addiction is to educate {\it at-risk} individuals about the costs of addiction. This, in turn, may encourage individuals to look at these costs as a deterrent, which can ultimately help decrease the addicted population. Finally, more considerable efforts are needed not only to encourage reformed individuals to mentor individuals in an {\it at-risk} environment but also to reduce the relapse rate and help to educate the {\it at-risk} population to help contain individuals from getting into addiction.  

\section*{Acknowledgement(s)}

The authors would like to thank support by the Research Center in Pure and Applied Mathematics and the Department of Mathematics at Universidad de Costa Rica.

\section*{Disclosure statement}

All authors declare no conflicts of interest in this paper.

%\section*{Funding}

%An unnumbered section, e.g.\ \verb"\section*{Funding}", may be used for grant details, etc.\ if required and included \emph{in the non-anonymous version} before any Notes or References.

%\section*{Notes on contributor(s)}

%An unnumbered section, e.g.\ \verb"\section*{Notes on contributors}", may be included \emph{in the non-anonymous version} if required. A photograph may be added if requested.


\begin{thebibliography}{99}
%\bibliographystyle{plain} 
\bibitem{behrens1999dynamic} D.A.~Behrens, J.P.~Caulkins, and G.~Tragler, {\em A dynamic model of drug initiation: implications for treatment and drug control}, Math. Biosci. 159(1999), pp. 1--20.
%J.L.~Haunschmied, G.~Feichtinger.

\bibitem{brauer2001mathematical} F.~Brauer, and C.~Castillo-Chavez, {\em Mathematical Models in Population Biology and Epidemiology}, Springer-Verlag, New York, 2001.

\bibitem{cami2003}
J. Cam\'i, and M. Farr\'e,
\newblock {\em Drug Addiction},
\newblock N. Engl. J. Med. 349(2003), pp. 975--986. 
\newblock Available at \url{https://doi.org/10.1056/NEJMra023160}.

\bibitem{chowell2003}
G. Chowell, P.W. Fenimore, and M.A. Castillo-Garsow, 
%C. Castillo-Chavez.
\newblock {\em SARS outbreaks in Ontario, Hong Kong and Singapore: the role of diagnosis and isolation as a control mechanism},
\newblock J. Theoret. Biol. 224(2003), pp. 1--8.

\bibitem{davidoff2006}
L. Davidoff, K. Sutton, and G.Y. Toutain, 
%F. Sanchez, C. Kribs-Zaleta, C. Castillo-Chavez.
\newblock {\em Mathematical modeling of the sex worker industry as a supply and demand system},
\newblock Tech. Rep. MTBI-03-06M, Simon A. Levin Mathematical, Computational and Modeling Sciences Center, Arizona State University, Arizona, USA, 2006.

\bibitem{decker1996life}
S.H.~Decker, and B.~Van Winkle,
\newblock {\em Life in the gang: Family, friends, and violence},
\newblock Cambridge University Press, Cambridge, 1996.

\bibitem{falck2008}
R.S. Falck, J. Wang, and R.G. Carlson,
\newblock {\em Among long-term crack smokers, who avoids and who succumbs to cocaine addiction?},
\newblock Drug Alcohol Depend. 98(2008), pp. 24--29.
\newblock Available at \url{https://doi.org/10.1016/j.drugalcdep.2008.04.004}.

\bibitem{garcia2022covid}
Y.E. Garc\'ia, G. Mery, and P. V\'asquez, 
%J.G. Calvo, L. Barboza, T. Rivas, F. Sanchez.
\newblock {\em Projecting the impact of Covid-19 variants and vaccination strategies in disease transmission using a multilayer network model in Costa Rica},
\newblock Scientific Reports, 12(2022).
\newblock Available at \url{https://doi.org/10.1038/s41598-022-06236-1}.

\bibitem{gonzalez2003too}
B.~Gonz\'alez, E.~Huerta-S\'anchez, and A.~Ortiz-Nieves, 
%T.~V\'azquez-Alvarez, C.~Kribs-Zaleta.
\newblock {\em Am I too fat? Bulimia as an epidemic},
\newblock J. Math. Psychol. 47(2003), pp. 515--526.

\bibitem{hass2009}
C. Haas, L. Karila, and W. Lowenstein,
\newblock {\em Cocaine and crack addiction: a growing public health problem},
\newblock Bull. Acad. Natl. Med. 193(2009), pp. 947--962.

\bibitem{hadeler1995core}
K.P.~Hadeler, and C.~Castillo-Chavez,
\newblock {\em A core group model for disease transmission},
\newblock Math. Biosci. 128(1995), pp. 41--55.

\bibitem{hethcote2000mathematics}
H.W.~Hethcote,
\newblock {\em The mathematics of infectious diseases},
\newblock SIAM Rev. 42(2000), pp. 599--653.

\bibitem{klein2021}
M. Klein,
\newblock {\em Relapse into opiate and crack cocaine misuse: a scoping review},
\newblock Addict. Res. Theory. 29(2021).
\newblock Available at \url{https://doi.org/10.1080/16066359.2020.1724972}.

\bibitem{obrien2006}
C.P. O'Brien,
\newblock {\em Drug addiction and drug abuse},
\newblock in {\em The Pharmacological Basis of Therapeutics},
\newblock L.L. Brunton, J.S. Lazo and K.L. Parker, eds., 
\newblock Goodman \& Gilman's 11th Ed., 2006, pp. 607--628.

\bibitem{ronaldo2017}
R. Lopes-Rosa, F.P. Kessler, and T.G. Pianca, 
%L. Guimarães, P. Ferronato, E. Pagnussat, H. Moura, F. Pechansky, L. von Diemen.
\newblock {\em Predictors of early relapse among adolescent crack users},
\newblock J. Addic. Dis. 36(2017), pp. 136--143.
\newblock Available at \url{https://doi.org/10.1080/10550887.2017.1295670}.

\bibitem{sanchez2019zika}
F. Sanchez, L. Barboza, and P. V\'asquez,
\newblock {\em Parameter estimates of the 2016-2017 Zika outbreak in Costa Rica: An approximate Bayesian computation (ABC) approach},
\newblock Math. Biosci. Eng. 16(2019), pp. 2738--2755.
\newblock Available at \url{https://doi.org/10.3934/mbe.2019137}.

\bibitem{sanchez2007drinking}
F.~Sanchez, X.~Wang, and C.~Castillo-Chavez, 
%D.M.~Gorman, P.J.~Gruenewald.
\newblock {\em Drinking as an epidemic: A simple mathematical model with recovery and relapse},
\newblock in {\em Therapist's guide to evidence based relapse prevention},
\newblock K. Witkiewitz and G.A. Marlatt, eds., 
\newblock Academic Press, Burlington, 2007, pp. 353-368.

\bibitem{song2006raves}
B.~Song, M.~Castillo-Garsow, and K.R.~Rios-Soto, 
%M.~Mejran, L.~Henso, C.~Castillo-Chavez.
\newblock {\em Raves, clubs and ecstasy: the impact of peer pressure},
\newblock Math. Biosci. Eng. 3(2006), pp. 249--266.

\bibitem{wise2014}
R. Wise, and G. Koob,
\newblock {\em The development and maintenance of drug addiction},
\newblock Neuropsychopharmacol. 39(2014), pp. 254--262. 
\newblock Available at \url{https://doi.org/10.1038/npp.2013.261}.

\bibitem{xiao2010dynamics}
Y.~Xiao, and S.~Tang,
\newblock {\em Dynamics of infection with nonlinear incidence in a simple vaccination model},
\newblock Nonlinear Anal. Real World Appl. 11(2010), pp. 4154--4163.

\bibitem{who} World Health Organization (2022),
\newblock {\em COVID-19 pandemic triggers $25\%$ increase in prevalence of anxiety and depression worldwide},
\newblock Available at \url{https://www.who.int/news/item/02-03-2022-covid-19-pandemic-triggers-25-increase-in-prevalence-of-anxiety-and-depression-worldwide}.
\end{thebibliography}
\end{document}